\documentclass[11pt,a4paper]{article}
\usepackage{amssymb,amsmath,amsthm,epsfig,verbatim,pstricks,url}
\usepackage{graphbox} 
\usepackage{enumitem} 
\usepackage{ifpdf}  
\newtheorem{theorem}{\sc Theorem.}[section]
\newtheorem{lemma}[theorem]{\sc Lemma.}
\newtheorem{remark}[theorem]{\sc Remark.}

\newtheorem{definition}[theorem]{\sc Definition.}

\newenvironment{AMS}%
{{\upshape\bfseries AMS subject classifications. }\ignorespaces}{}
\newenvironment{keywords}{{\upshape\bfseries Key words. }\ignorespaces}{}

\newcommand{\bR}{{\mathbb R}}

\newcommand{\sigmaO}{o}
\newcommand{\GT}{{\mathcal{G}_T}}           
\newcommand{\GhT}{{\mathcal{G}^h_T}}
\newcommand{\Whh}{V(\Gamma^h)}
\newcommand{\Wht}{V(\Gamma^h(t))}
\newcommand{\Vht}{\underline{V}(\Gamma^h(t))}
\newcommand{\Whm}{V(\Gamma^m)}
\newcommand{\Vhh}{\underline{V}(\Gamma^h)}
\newcommand{\Vhm}{\underline{V}(\Gamma^m)}
\newcommand{\WhGhT}{V(\GhT)}
\newcommand{\VhGhT}{\underline{V}(\GhT)}
\newcommand{\nabs}{\nabla_{\!s}}
\newcommand{\tG}{{\widetilde{G}}}
\newcommand{\vol}{\operatorname{vol}}
\newcommand{\diag}{\operatorname{diag}}
\newcommand{\diam}{\operatorname{diam}}
\newcommand{\dist}{\operatorname{dist}}

\newcommand{\dH}[1]{\;{\rm d}{\mathcal{H}}^{#1}} 
\newcommand{\dL}[1]{\;{\rm d}{\mathcal{L}}^{#1}} 

\newcommand{\id}{{\rm id}}

\newcommand{\dd}[1]{\frac{\rm d}{{\rm d}#1}}
\newcommand{\ddt}{\dd{t}}
\newcommand{\dt}{\;{\rm d}t}
\def\epsilon{\varepsilon} 
\newcommand{\ttau}{\Delta t}

\def\hat{\widehat}

\newcommand{\errorXx}{\|\Gamma^h - \Gamma\|_{L^\infty}}
\newcommand{\errorUu}{\|U^h - u\|_{L^\infty}}
\textwidth 455pt \oddsidemargin 0pt \evensidemargin 0pt \headsep
0pt \headheight 0pt \textheight 655pt \parskip 10pt \parindent 0pt

\begin{document}
\title{
A structure preserving front tracking finite element method for the 
Mullins--Sekerka problem}
\author{Robert N\"urnberg\footnotemark[3]}

\renewcommand{\thefootnote}{\fnsymbol{footnote}}
\footnotetext[2]{Dipartimento di Mathematica, Universit\`a di Trento,
38123 Trento, Italy \\ {\tt robert.nurnberg@unitn.it}}

\date{}

\maketitle

\begin{abstract}
We introduce and analyse a fully discrete approximation for a mathematical
model for the solidification and liquidation of materials of 
negligible specific heat. The model is a two-sided Mullins--Sekerka problem.
The discretization uses finite elements in space and an independent
parameterization of the moving free boundary. We prove unconditional stability
and exact volume conservation for the introduced scheme.
Several numerical simulations, including for nearly crystalline surface
energies, demonstrate the practicality and accuracy of the presented numerical
method.
\end{abstract} 

\begin{keywords} 
Mullins--Sekerka, Hele--Shaw, anisotropy, parametric finite element method,
unconditional stability, volume preservation
\end{keywords}

\begin{AMS} 
35K55, 35R35, 65M12, 65M50, 65M60, 
74E10, 74E15, 80A22
\end{AMS}
\renewcommand{\thefootnote}{\arabic{footnote}}

\setcounter{equation}{0}
\section{Introduction} 

In this paper we propose and analyse a novel numerical approximation of the
following moving boundary problem. Let $\Omega\subset\bR^d$, $d\geq2$, 
be a domain with a Lipschitz boundary $\partial\Omega$ and
outer unit normal $\vec\nu_{\Omega}$.
Given the hypersurface $\Gamma(0) \subset \Omega$, 
find $u:\Omega\times [0,T]\to\bR$ and the evolving hypersurface
$(\Gamma(t))_{t\in[0,T]}$ such that for all $t\in(0,T]$ the following
conditions hold:
\begin{subequations} \label{eq:MS}
\begin{alignat}{2}
-\Delta u &=0 \quad &&\text{in } \Omega \setminus \Gamma(t),\label{eq:MSa}\\
u &= \varkappa \quad &&\text{on } \Gamma(t),\label{eq:MSb}\\
\left[\frac{\partial u}{\partial{\vec\nu}}\right]_{\Gamma(t)}
 &= -\mathcal{V}\quad && \text{on } \Gamma(t),\label{eq:MSc}\\
\frac{\partial u}{\partial {\vec\nu_\Omega}} &= 0 \quad && \text{on } 
\partial\Omega,\label{eq:MSd}
\end{alignat}
\end{subequations}
where $\vec\nu$ is the outer unit normal of $\Gamma(t)$, 
$\varkappa$ is its mean curvature,
$[\cdot]_{\Gamma(t)}$ denotes the jump of a quantity across the interface
$\Gamma(t)$ and $\mathcal{V}$ is the normal velocity of 
$(\Gamma(t))_{t\in[0,T]}$. Here our sign convention is such that the unit
sphere has mean curvature $\varkappa = 1 - d < 0$.

The problem \eqref{eq:MS} is usually called the Mullins--Sekerka problem,
or the two-sided Mullins--Sekerka flow,
and geometrically it can be viewed as a prototype for a 
curvature driven interface evolution that involves
quantities defined in the bulk regions surrounding the interface.
Alternative names for \eqref{eq:MS} in the literature are Hele--Shaw flow 
with surface tension, or quasi-static Stefan problem. For theoretical
results on the existence of strong and weak solutions to \eqref{eq:MS} 
we refer to \cite{Chen93,ChenHY96,EscherS97} and the references therein.
Physically, the system \eqref{eq:MS} was derived as a model 
for solidification and liquidation of materials of negligible specific heat,
\cite{MullinsS63}. In addition, the Mullins--Sekerka problem arises as the
sharp interface limit of the non-degenerate Cahn--Hilliard equation,
as was proved in \cite{AlikakosBC94}. Here we recall that the Cahn--Hilliard
equation models the process of phase separation and coarsening in melted 
alloys, \cite{CahnH58}. 

As regards the numerical approximation of the Mullins--Sekerka problem
\eqref{eq:MS}, several different approaches are available from the literature.
Approximations based on a boundary integral formulation can be found in e.g.\
\cite{BatesCD95,ZhuCH96,Mayer00}, while a front-tracking method based on
parametric finite elements has been proposed in \cite{dendritic}. For a
finite difference approximation of a
levelset formulation we refer to \cite{ChenMOS97}, while finite element
approximations of phasefield models have been considered in
\cite{FengP04,FengP04a,eck,vch}.
In this paper we will consider a front-tracking method, where the numerical
approximation of the interface $\Gamma(t)$ is completely independent of the 
finite element mesh for the bulk equation \eqref{eq:MSa}. 
In fact, we will propose an improvement for the unfitted finite element
approximation that was introduced by the author together with John W.\ Barrett
and Harald Garcke in \cite{dendritic}. Here we will put particular emphasis on
the conservation of physically relevant properties on the discrete level.

By way of motivation, we observe that it is not difficult to prove that a
solution to the Mullins--Sekerka problem \eqref{eq:MS} reduces the surface 
area $|\Gamma(t)|$, while it maintains the volume of the enclosed domain
$\Omega_-(t)$. In particular, it holds that
\begin{equation} \label{eq:dte}
\ddt |\Gamma(t)| 
= \ddt \int_{\Gamma(t)} 1 \dH{d-1}
= - \int_{\Gamma(t)} \varkappa\mathcal{V} \dH{d-1}
= -\int_{\Omega} |\nabla u|^2 \dL{d} \leq 0
\end{equation}
and
\begin{equation} \label{eq:dtv}
\ddt \vol(\Omega_-(t)) = \int_{\Gamma(t)} \mathcal{V} \dH{d-1}
= 0,
\end{equation}
see e.g.\ Remark~105 in \cite{bgnreview}. We remark that these properties
motivate the interpretation of \eqref{eq:MS} as a volume
preserving gradient flow of the surface area. 
Examples for volume preserving gradient flows of the surface area that only
depend on geometric properties of the interface are the conserved mean
curvature flow and surface diffusion, \cite{CahnT94,TaylorC94}. 
In contrast, the flow \eqref{eq:MS} also depends on the field $u$ that is
defined in the bulk.
A detailed description of the gradient flow structure for \eqref{eq:MS} 
can be found in \cite[Appendix~A]{dendritic}. 
Clearly, for a numerical approximation of \eqref{eq:MS} it would be highly
desirable to have a discrete analogue of the energy dissipation law 
\eqref{eq:dte} and the volume conservation property \eqref{eq:dtv}. 
The fully discrete method from \cite{dendritic} satisfies a discrete analogue
of \eqref{eq:dte}, in particular it is unconditionally stable. But a discrete
version of \eqref{eq:dtv} does not hold. That means that for large time steps,
and in certain situations, a significant loss of mass can be observed in
computations. On utilizing very recent ideas from \cite{JiangL21,BaoZ21},
we will appropriately adapt the fully discrete scheme from \cite{dendritic} 
to obtain a new method for \eqref{eq:MS} that satisfies discrete analogues of 
both \eqref{eq:dte} and \eqref{eq:dtv}. We believe this is the first such
fully discrete approximation of \eqref{eq:MS} in the literature.

In many physical applications, e.g.\ when considering the solidification
or liquidation of materials, the density of the interfacial energy is
directionally dependent. A typical example for such an anisotropic surface
energy is
\begin{equation} \label{eq:Egamma}
|\Gamma(t)|_\gamma = \int_{\Gamma(t)} \gamma(\vec\nu) \dH{d-1},
\end{equation}
where $\gamma$ is a given anisotropy function.
We refer to \cite{TaylorCH92,BellettiniNP99,DeckelnickDE05,Giga06} 
for more details on
anisotropic surface energies. On defining the anisotropic mean curvature
$\varkappa_\gamma$ as the first variation of \eqref{eq:Egamma}, so that e.g.\
$\ddt |\Gamma(t)|_\gamma 
= - \int_{\Gamma(t)} \varkappa_\gamma\mathcal{V} \dH{d-1}$, we can introduce
the anisotropic Mullins--Sekerka problem by replacing $\varkappa$ with
$\varkappa_\gamma$ in \eqref{eq:MS}. Then the energy dissipation \eqref{eq:dte} 
and volume conservation \eqref{eq:dtv} hold as before, where of course in the
former we need to replace $|\Gamma(t)|$ with $|\Gamma(t)|_\gamma$ and 
$\varkappa$ with $\varkappa_\gamma$. The numerical method we discuss in this
paper, by virtue of being derived from the scheme in \cite{dendritic},
can deal with the anisotropic Mullins--Sekerka problem as well. In addition,
for a class of anisotropies that was first proposed in \cite{triplejANI,ani3d},
the anisotropic scheme will still be structure preserving, in the sense that
discrete analogues of the anisotropic \eqref{eq:dte} and \eqref{eq:dtv} 
will hold.

In summary, the novel fully practical and fully discrete numerical method 
proposed in this paper has the following properties:
\begin{itemize}[itemsep=0.5pt,topsep=-3pt]
\item 
The method is unconditionally stable, i.e.\ it mimics \eqref{eq:dte} on the
discrete level.

\item
The volume of the two phases, i.e.\ the interior and the exterior of the
interface, is conserved exactly, as a fully discrete analogue to 
\eqref{eq:dtv}. 

\item
The polyhedral interface approximation maintains a nice mesh property,
leading to asymptotically equidistributed polygonal curves in the case $d=2$
for an isotropic surface energy.

\item
The method is unfitted, meaning mesh deformations of the bulk mesh are 
avoided, and no remeshings of the bulk triangulation are necessary.

\item
The method can take an anisotropic surface energy into account, meaning that
a discrete analogue of the anisotropic generalization of \eqref{eq:dte} 
still holds on the fully discrete level.
\end{itemize}

The remainder of the paper is organized as follows. In Section~\ref{sec:weak}
we introduce a weak formulation for the Mullins--Sekerka problem \eqref{eq:MS}
on which our finite element method is going be based. We also state a
semidiscrete continuous-in-time approximation and briefly analyse its
properties. Our novel fully discrete finite element approximation is presented
and analysed in Section~\ref{sec:fd}, where in order to focus on the structure 
preserving aspect of the method, we at first concentrate on the isotropic case.
Subsequently, in Section~\ref{sec:ani}, we discuss
the extension of the weak formulation and the finite element scheme to the
anisotropic case. Finally, in Section~\ref{sec:nr} we consider several
numerical simulations for the introduced numerical method, including some
convergence experiments.

\setcounter{equation}{0}
\section{Weak formulation and semidiscrete approximation} \label{sec:weak}
Our parametric finite element method will be based on a suitable weak
formulation of \eqref{eq:MS}, which we introduce in this section. Here we
follow the notation and presentation from the recent review article
\cite{bgnreview}.

Let 
\[\GT = \bigcup_{t\in[0,T]} (\Gamma(t)\times\{t\})\] 
be a smooth evolving hypersurface, such that for
every $t \in [0,T]$ the closed hypersurface $\Gamma(t) \subset \Omega$
partitions the domain $\Omega$ 
into two phases: the interior $\Omega_-(t)$ and the
exterior $\Omega_+(t) = \Omega \setminus \overline{\Omega_-(t)}$, so that
$\partial\Omega_-(t) = \Gamma(t)$. In what follows, we will often not
distinguish between $\Gamma(t) \times \{t\}$ and $\Gamma(t)$. Moreover, as we
are interested in a parametric formulation of the evolving interface, we assume
that $\vec x : \Upsilon\times [0,T] \to \bR^d$ is a global parameterization
of $(\Gamma(t))_{t\in[0,T]}$, where $\Upsilon \subset \bR^d$ is a smooth
reference manifold. We recall that the induced full velocity of
$\Gamma(t)$ is defined by
\begin{equation*}
\vec{\mathcal{V}}(\vec x(\vec q,t),t) = (\partial_t\vec x) (\vec q,t) 
\quad\forall\ (\vec q,t) \in \Upsilon \times [0,T],
\end{equation*}
and satisfies $\vec{\mathcal{V}} \cdot \vec\nu = \mathcal{V}$.
Multiplying \eqref{eq:MSa} with a test
function $\phi \in H^1(\Omega)$, integrating over $\Omega$ 
and performing integration by parts yields
\[
0 = \int_{\Omega_-(t)\cup\Omega_+(t)} \phi \Delta u \dL{d}
= \int_{\partial\Omega} \phi\frac{\partial u}{\partial \vec\nu_\Omega} \dH{d-1}
-\int_{\Gamma(t)}\phi 
\left[\frac{\partial u}{\partial \vec\nu}\right]_{\Gamma(t)}\dH{d-1}
-\int_\Omega \nabla u \cdot \nabla\phi \dL{d} ,
\]
which in view of the conditions \eqref{eq:MSc} and \eqref{eq:MSd} reduces to
\begin{equation*} 
0 = \int_{\Gamma(t)}\phi \mathcal{V}\dH{d-1}
-\int_\Omega \nabla u \cdot \nabla\phi \dL{d} .
\end{equation*}
The only other ingredient needed for the weak formulation is the well-known
variational formulation of mean curvature, given by
\begin{equation} \label{eq:varkappa}
\int_{\Gamma(t)} \varkappa\vec\eta\cdot\vec\nu + \nabs\vec\id: \nabs\vec\eta
 \dH{d-1} 
= 0 \qquad \vec\eta \in [H^1(\Gamma(t))]^d,
\end{equation}
where $\vec\id$ denotes the identity function in $\bR^d$ and $\nabs$ is
the surface gradient on $\Gamma(t)$,
see e.g.\ Remark~22 in \cite{bgnreview}. Hence, on denoting the
$L^2$--inner products over $\Omega$ and $\Gamma(t)$ by 
$(\cdot,\cdot)$ and $\langle\cdot,\cdot\rangle_{\Gamma(t)}$, respectively,
we can state the weak formulation as follows.

Given a closed hypersurface $\Gamma(0) \subset \Omega$, 
we seek an evolving hypersurface $(\Gamma(t))_{t\in[0,T]}$
that separates $\Omega$ into $\Omega_-(t)$ and $\Omega_+(t)$,
with a global parameterization and induced velocity field $\vec{\mathcal{V}}$,
and $\varkappa : L^2(\GT)$ as well as 
$u : \Omega \times [0,T] \to \bR$, such that for almost all $t \in (0,T)$
it holds for $(u(\cdot,t),\vec {\mathcal V}(\cdot,t),\varkappa(\cdot,t))\in 
H^1(\Omega) \times [L^2(\Gamma(t))]^d \times L^2(\Gamma(t))$ that 
\begin{subequations} \label{eq:WMS}
\begin{align} \label{eq:WMS1}
\left(\nabla u,\nabla\phi\right) -
\left\langle \vec{\mathcal{V}},\phi\vec\nu \right\rangle_{\Gamma(t)} & = 0
\qquad\forall\ \phi\in H^1(\Omega),\\
\left\langle u - \varkappa,\chi \right\rangle_{\Gamma(t)} 
& = 0 \qquad\forall\ \chi\in L^2(\Gamma(t))\label{eq:WMS2},\\
\left\langle \varkappa\vec\nu,\vec\eta \right\rangle_{\Gamma(t)} 
+ \left\langle \nabs\vec\id , \nabs\vec\eta 
\right\rangle_{\Gamma(t)}& =0 \qquad
\forall\ \vec\eta\in [H^1(\Gamma(t))]^d. \label{eq:WMS3}
\end{align}
\end{subequations}
Clearly, choosing $\phi = u$ in \eqref{eq:WMS1} and 
$\chi = \mathcal{V} = \vec{\mathcal{V}} \cdot \vec\nu$ in \eqref{eq:WMS2} 
yields the energy dissipation law
\eqref{eq:dte}, while choosing $\phi = 1$ in \eqref{eq:WMS1} leads to the
volume conservation property \eqref{eq:dtv}. 
Mimicking these testing procedures on the discrete level will be crucial to 
prove the structure preserving aspect of our finite element approximations.

For the numerical approximation of \eqref{eq:WMS} we first introduce the
necessary finite element space in the bulk. To this end, we assume that
$\Omega$ is a polyhedral domain. Then let $\mathcal{T}^h$ be a
regular partitioning of $\Omega$ into disjoint open simplices, so that
$\overline{\Omega}=\cup_{\sigmaO\in\mathcal{T}^h}\overline{\sigmaO}$, 
see \cite{Ciarlet78}.
Associated with $\mathcal{T}^h$ is the finite element space 
\begin{equation} \label{eq:Sh}
S^h = \left\{\chi \in C^0(\overline{\Omega}) : \chi_{\mid_{\sigmaO}}
\text{ is affine } \forall\ \sigmaO \in \mathcal{T}^h\right\} 
\subset H^1(\Omega). 
\end{equation}
In addition we need appropriate parametric finite element spaces.
Let a polyhedral hypersurface $\Gamma^h \subset \bR^d$ be given by
\begin{equation} \label{eq:Gammah}
\Gamma^h = \bigcup_{j=1}^J \overline{\sigma_j},
\end{equation}
where $\{\sigma_j\}_{j=1}^J$ is a family of disjoint, 
(relatively) open $(d-1)$-simplices, such that
$\overline{\sigma_i}\cap\overline{\sigma_j}$ for $i\not=j$ is either empty or
a common $k$-simplex of $\overline{\sigma_i}$ and $\overline{\sigma_j}$, 
$0 \leq k < d$. We denote the vertices of $\Gamma^h$
by $\{\vec q_k\}_{k=1}^K$, and assume that the vertices of $\sigma_j$
are given by $\{\vec q_{j,k}\}_{k=1}^{d}$, $j=1,\ldots,J$.
Here the numbering of the local vertices is assumed to be such that
\begin{equation} \label{eq:nuh}
 \vec\nu^h =
\frac{(\vec q_{j,2}-\vec q_{j,1}) \land \cdots \land
(\vec q_{j,d}-\vec q_{j,1})}{|(\vec q_{j,2}-\vec q_{j,1}) \land
\cdots \land (\vec q_{j,d}-\vec q_{j,1})|}
\qquad\text{on } \sigma_j, \quad j = 1,\ldots,J ,
\end{equation}
defines the outer normal $\vec\nu^h \in [L^\infty(\Gamma^h)]^d$ to the interior
$\Omega^h_-$ of $\Gamma^h = \partial\Omega^h_-$.
Here we recall the definition of the wedge product from 
\cite[Definition~45]{bgnreview}, i.e.\ for 
$\vec v_1, \ldots, \vec v_{d-1} \in \bR^d$, the wedge product 
is the unique vector such that 
$\vec b \cdot (\vec v_1\land\cdots\land\vec v_{d-1})
= \det(\vec v_1,\ldots,\vec v_{d-1}, \vec b)$ for all $\vec b\in\bR^d$. 
It follows that it is the usual cross product
of two vectors in $\bR^3$, and the anti-clockwise rotation 
through $\frac\pi2$ of a vector in $\bR^2$. We note also that
\begin{equation} \label{eq:detsigma}
|\sigma_j| = 
\frac{1}{d-1}|(\vec q_{j,2}-\vec q_{j,1}) \land \cdots \land
(\vec q_{j,d}-\vec q_{j,1})|.
\end{equation}

We define the finite element spaces of continuous
piecewise linear functions on $\Gamma^h$ via
\[
\Whh = \{\chi \in C^0(\Gamma^h) : \chi_{\mid_{\sigma_j}}
\text{ is affine for $j=1, \ldots, J$} \}\quad\text{and}\quad \Vhh = [\Whh]^d.
\]
We let $\{\phi^{\Gamma^h}_k\}_{k=1}^K$ denote
the standard basis of $\Whh$, i.e.\ 
\[
\phi^{\Gamma^h}_i(\vec q_j) = \delta_{ij}, \qquad i,j = 1,\ldots,K.
\]
Moreover, we let $\pi_{\Gamma^h}:C^0(\Gamma^h) \to \Whh$ be the standard 
interpolation operator, and let 
$\left\langle \cdot, \cdot \right\rangle_{\Gamma^h}$
denote the $L^2$--inner product on $\Gamma^h$.
For two piecewise continuous functions
$u,v\in L^\infty(\Gamma^h)$, with possible jumps
across the edges of $\{\sigma_j\}_{j=1}^J$,
we introduce the mass lumped inner product
$\left\langle\cdot,\cdot\right\rangle^h_{\Gamma^h}$ as
\begin{equation}
\left\langle u, v \right\rangle^h_{\Gamma^h} =
\frac1{d}\sum_{j=1}^J |\sigma_j|
\sum_{k=1}^{d} (uv)((\vec q_{j,k})^-),
\label{eq:ip0}
\end{equation}
where $u((\vec q)^-) =
\underset{\sigma_j\ni \vec p\to \vec q}{\lim} u(\vec p)$.
The definition \eqref{eq:ip0}
is naturally extended to vector- and tensor-valued functions.
On recalling \eqref{eq:nuh}, we define the vertex normal vector 
$\vec\omega^h \in \Vhh$ to be the mass-lumped $L^2$--projection 
of $\vec\nu^h$ onto $\Vhh$, i.e.\
\begin{equation} \label{eq:omegah}
\left\langle \vec\omega^h, \vec\varphi \right\rangle_{\Gamma^h}^h 
= \left\langle \vec\nu^h, \vec\varphi \right\rangle_{\Gamma^h}
\quad\forall\ \vec\varphi\in\Vhh.
\end{equation}

{From} now on, we let \[\GhT = \bigcup_{t\in[0,T]} (\Gamma^h(t)\times\{t\})\]
be an evolving polyhedral hypersurface, so that  
$\Gamma^h(t)$, for each $t\in[0,T]$, is a polyhedral surface of the form 
\eqref{eq:Gammah} for fixed $J$ and $K$.
That is, $\Gamma^h(t)$ is defined through its elements 
$\{\sigma^h_j(t)\}_{j=1}^J$ and its vertices $\{\vec q^h_k(t)\}_{k=1}^K$.
We will often not distinguish between $\GhT$ and $(\Gamma^h(t))_{t\in[0,T]}$.
Then the full velocity of $\Gamma^h(t)$ is defined by
\begin{equation} \label{eq:Vh}
\vec{\mathcal{V}}^h(\vec z, t) = \sum_{k=1}^{K}
\left[\ddt\vec q_k(t)\right] \phi^{\Gamma^h(t)}_k(\vec z) 
\quad\forall\ (\vec z,t) \in \GhT.
\end{equation}
We also define the finite element spaces
\[
\WhGhT = \{ \chi \in C^0(\GhT) : 
\chi(\cdot, t) \in \Wht \quad\forall\ t \in [0,T] \}
\quad\text{and}\quad \VhGhT = [\WhGhT]^d.
\]

Our unfitted semidiscrete finite element approximation of \eqref{eq:WMS} can
then be formulated as follows.
Given the closed polyhedral hypersurface $\Gamma^h(0)$,
find an evolving polyhedral hypersurface $(\Gamma^h(t))_{t\in[0,T]}$, 
that separates $\Omega$ into $\Omega^h_-(t)$ and $\Omega^h_+(t)$,
with induced velocity $\vec{\mathcal{V}}^h \in \VhGhT$,
and $\kappa^h \in \WhGhT$ as well as $u^h \in S^h\times(0,T]$, 
such that for all $t \in (0,T]$ it holds for
$(u^h(\cdot,t), \vec{\mathcal{V}}^h(\cdot, t), \kappa^h(\cdot,t))\in S^h \times
\Vht \times \Wht$ that
\begin{subequations} \label{eq:sdMS}
\begin{align} \label{eq:sdMS1}
\left(\nabla u^h,\nabla\phi\right) -
\left\langle \pi_{\Gamma^h(t)}
\left[\vec{\mathcal{V}}^h \cdot \vec\omega^h\right],
\phi\right\rangle_{\Gamma^h(t)}^{(h)}  & = 0
\qquad\forall\ \phi\in S^h,\\
\left\langle u^h,\chi \right\rangle_{\Gamma^h(t)}^{(h)} 
- \left\langle\kappa^h,\chi \right\rangle_{\Gamma^h(t)}^h
& = 0 \qquad\forall\ \chi\in \Wht,\label{eq:sdMS2}\\
\left\langle \kappa^h\vec\omega^h,\vec\eta \right\rangle_{\Gamma^h(t)}^h 
+ \left\langle \nabs\vec\id , \nabs\vec\eta 
\right\rangle_{\Gamma^h(t)}& =0 \qquad
\forall\ \vec\eta\in \Vht, \label{eq:sdMS3}
\end{align}
\end{subequations}
where the surface gradients $\nabs$ in \eqref{eq:sdMS3} are defined piecewise
on the polyhedral surface $\Gamma^h(t)$. 
Here and throughout, the notation $\cdot^{(h)}$ means an expression with or
without the superscript $h$. That is, the scheme \eqref{eq:sdMS}$^h$ employs
numerical integration in the two relevant terms in \eqref{eq:sdMS1} and
\eqref{eq:sdMS2}, while the scheme \eqref{eq:sdMS} uses true integration in
these two terms. 
We also remark that thanks to \eqref{eq:omegah} and the piecewise constant 
nature of $\vec\nu^h$, the first term in \eqref{eq:sdMS3} is equivalent
to $\langle \kappa^h\vec\nu^h,\vec\eta \rangle_{\Gamma^h(t)}^h$. We prefer
to write it in terms of $\vec\omega^h$ to make the testing procedure in the
analysis easier to follow. Before we present a proof for the structure
preserving properties of \eqref{eq:sdMS}$^{(h)}$, 
we recall the following fundamental results from \cite{bgnreview}.

\begin{lemma}
Let $(\Gamma^h(t))_{t\in[0,T]}$ be an evolving polyhedral hypersurface. Then it
holds that
\begin{equation} \label{eq:dteh}
\ddt\left|\Gamma^h(t)\right| 
= \left\langle \nabs\vec\id, \nabs\vec{\mathcal{V}}^h 
\right\rangle_{\Gamma^h(t)} 
\end{equation}
and
\begin{equation} \label{eq:dtvh}
\ddt\vol(\Omega^h_-(t)) 
= \left\langle \vec{\mathcal{V}}^h,\vec\nu^h\right\rangle_{\Gamma^h(t)}.
\end{equation}
\end{lemma}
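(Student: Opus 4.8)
The plan is to treat both identities as elementwise first-variation computations for the piecewise-flat evolving surface and then sum over the simplices $\{\sigma_j^h(t)\}_{j=1}^J$. The structural fact I will exploit is that, by \eqref{eq:Vh}, the velocity $\vec{\mathcal{V}}^h$ restricted to each element $\sigma_j^h(t)$ is affine and coincides with the time derivative of the affine map $\vec X_j(\cdot,t)\colon\hat\sigma\to\sigma_j^h(t)$ that interpolates the moving vertices $\vec q_{j,k}(t)$ over a fixed reference $(d-1)$-simplex $\hat\sigma$. This converts each $\ddt|\sigma_j^h(t)|$ and each boundary flux into an integral over the fixed domain $\hat\sigma$, where time differentiation commutes with the spatial integral.

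For the area identity \eqref{eq:dteh}, I would write $|\Gamma^h(t)|=\sum_{j=1}^J|\sigma_j^h(t)|$ and, on each element, express $|\sigma_j^h(t)|=\int_{\hat\sigma}\theta_j(t)\dH{d-1}$, where $\theta_j$ is the (spatially constant) area element of $\vec X_j$, given explicitly by the wedge-product expression in \eqref{eq:detsigma}. Differentiating $\theta_j$ in time, either directly from \eqref{eq:detsigma} or via Jacobi's formula for the determinant of the induced metric, yields $\ddt\theta_j=\theta_j\,\nabs\cdot\vec{\mathcal{V}}^h$ on $\sigma_j^h(t)$. I would then invoke the elementary identity $\nabs\vec\id:\nabs\vec{\mathcal{V}}^h=\nabs\cdot\vec{\mathcal{V}}^h$, valid on each flat simplex because $\nabs\vec\id=\Id-\vec\nu^h\otimes\vec\nu^h$ is the constant tangential projection and $(\nabs\vec{\mathcal{V}}^h)\,\vec\nu^h=\vec 0$; summing the resulting elementwise equalities $\ddt|\sigma_j^h(t)|=\langle\nabs\vec\id,\nabs\vec{\mathcal{V}}^h\rangle_{\sigma_j^h(t)}$ over $j$ then gives \eqref{eq:dteh}.

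For the volume identity \eqref{eq:dtvh}, the cleanest route is the Reynolds transport theorem applied to $\vol(\Omega_-^h(t))=\int_{\Omega_-^h(t)}1\dL{d}$. Since $\Omega_-^h(t)$ is a Lipschitz (polyhedral) domain whose boundary $\Gamma^h(t)$ moves with the globally continuous, elementwise-affine velocity $\vec{\mathcal{V}}^h$, the interior term drops out and only the boundary flux survives, giving $\ddt\vol(\Omega_-^h(t))=\int_{\Gamma^h(t)}\vec{\mathcal{V}}^h\cdot\vec\nu^h\dH{d-1}=\langle\vec{\mathcal{V}}^h,\vec\nu^h\rangle_{\Gamma^h(t)}$, as claimed; note that the tangential part of $\vec{\mathcal{V}}^h$ is automatically annihilated by the pairing with $\vec\nu^h$. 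To keep the derivation entirely self-contained and piecewise-affine, I would instead start from the divergence-theorem representation $\vol(\Omega_-^h(t))=\tfrac1d\langle\vec\id,\vec\nu^h\rangle_{\Gamma^h(t)}$, write the right-hand side as a multilinear function of the vertex positions $\vec q_k(t)$ via \eqref{eq:detsigma}, differentiate in $t$ using \eqref{eq:Vh}, and reorganize; the terms recombine exactly to the flux $\langle\vec{\mathcal{V}}^h,\vec\nu^h\rangle_{\Gamma^h(t)}$ (the $d$-dimensional analogue of differentiating the shoelace area formula).

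The step I expect to be the main obstacle is the bookkeeping across the internal $(d-2)$-skeleton where the simplices meet and $\vec\nu^h$ is discontinuous. Because $\nabs$ is only defined piecewise and $\vec{\mathcal{V}}^h$ is globally continuous but merely Lipschitz, I must argue that the elementwise first variations genuinely add up with no spurious edge contributions, i.e.\ that no boundary integrals over the shared $(d-2)$-faces appear. This legitimacy of the elementwise-then-sum strategy relies precisely on the continuity of $\vec{\mathcal{V}}^h$ across edges (so that each shared vertex carries a single well-defined velocity) and on each elementwise integral being taken over a \emph{closed} simplex. The analogous care is needed for the volume identity at the shared faces of $\Omega_-^h(t)$. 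Both computations are standard for polyhedral surfaces and may alternatively simply be quoted from \cite{bgnreview}.
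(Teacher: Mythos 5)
Your proposal is correct, but it takes a different route from the paper: the paper offers no computation at all, proving both identities purely by citation --- \eqref{eq:dteh} is attributed to Theorem~70 combined with Lemma~9 of \cite{bgnreview}, and \eqref{eq:dtvh} to Theorem~71 of \cite{bgnreview} --- whereas you reconstruct the underlying arguments from scratch. In fact your decomposition of the area identity into (i) the elementwise transport formula $\ddt\theta_j=\theta_j\,\nabs\cdot\vec{\mathcal{V}}^h$ obtained by pulling back to a fixed reference simplex, and (ii) the algebraic identity $\nabs\vec\id:\nabs\vec{\mathcal{V}}^h=\nabs\cdot\vec{\mathcal{V}}^h$, mirrors exactly the split between the two cited results, so your proof is essentially what stands behind the citation; what it buys is a self-contained presentation that makes explicit where the continuity of $\vec{\mathcal{V}}^h$ and the absence of interelement boundary terms are used. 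One caveat on your ``entirely self-contained'' variant of the volume identity: differentiating the cone-volume representation $\vol(\Omega^h_-(t))=\tfrac1d\langle\vec\id,\vec\nu^h\rangle_{\Gamma^h(t)}$ does \emph{not} reproduce the flux $\langle\vec{\mathcal{V}}^h,\vec\nu^h\rangle_{\sigma^h_j(t)}$ element by element (already for $d=2$ the two expressions differ by terms that only telescope around the closed polygon), so the ``terms recombine exactly'' step genuinely requires the summation over the closed surface and a cancellation argument across shared faces; your first route via the Reynolds transport theorem avoids this bookkeeping and is the cleaner of the two, and your closing remark that both facts may simply be quoted from \cite{bgnreview} is precisely what the paper does.
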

\begin{proof}
The result \eqref{eq:dteh} directly follows from Theorem~70 and Lemma~9 in
\cite{bgnreview}, while a proof for \eqref{eq:dtvh} is given in
Theorem~71 in \cite{bgnreview}. 
\end{proof}

We are now in a position to prove energy decay, volume conservation and 
good mesh quality properties for a solution of \eqref{eq:sdMS}$^{(h)}$.
Here for the definition of a conformal polyhedral surface we recall
Definition~60 from \cite{bgnreview}:
\begin{definition} \label{def:conformal}
A closed polyhedral hypersurface $\Gamma^h$,
with unit normal $\vec\nu^h$, is called a conformal polyhedral
hypersurface, if there exists a $\kappa^h \in \Whh$ such that
\begin{equation} \label{eq:conformal}
\left\langle \kappa^h\,\vec\nu^h, \vec\eta\right\rangle^h_{\Gamma^h}
= - \left\langle \nabs\,\vec\id, \nabs\,\vec\eta \right\rangle_{\Gamma^h}
\quad\forall\ \vec\eta \in \Vhh\,.
\end{equation}
\end{definition}
The discussion in \cite[\S4.1]{gflows3d} indicates that for $d=3$ surfaces
satisfying Definition~\ref{def:conformal}
are characterized by a good mesh quality, and this is confirmed by
a large body of numerical evidence in e.g.\ 
\cite{gflows3d,spurious,fluidfbp,tpfs}. 
On the other hand, for $d=2$ it is shown in \cite[Theorem~62]{bgnreview}
that any conformal polygonal curve is weakly equidistributed.

\begin{theorem} \label{thm:sd}
Let $(u^h, \GhT, \kappa^h)$ be a solution of \eqref{eq:sdMS}$^{(h)}$. 
Then it holds that
\begin{equation} \label{eq:MSdteh}
\ddt\left|\Gamma^h(t)\right| + \left(\nabla u^h, \nabla u^h\right) = 0.
\end{equation}
Moreover we have that
\begin{equation} \label{eq:MSdtvh}
\ddt \vol(\Omega^h_-(t)) = 0.
\end{equation}
Finally, for any $t \in (0,T]$, it holds that
$\Gamma^h(t)$ is a conformal polyhedral surface.
In particular, for $d=2$, any two neighbouring elements of the curve 
$\Gamma^h(t)$ either have equal length, or they are parallel.
\end{theorem}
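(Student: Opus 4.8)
The plan is to reproduce, on the discrete level, the two testing procedures that generate \eqref{eq:dte} and \eqref{eq:dtv} in the continuous setting, and then to convert the resulting variational identities into statements about $\ddt|\Gamma^h(t)|$ and $\ddt\vol(\Omega^h_-(t))$ by means of the geometric identities \eqref{eq:dteh} and \eqref{eq:dtvh}. For the energy law \eqref{eq:MSdteh} I would first test \eqref{eq:sdMS3} with $\vec\eta = \vec{\mathcal{V}}^h \in \Vht$, which rewrites $\langle \nabs\vec\id, \nabs\vec{\mathcal{V}}^h\rangle_{\Gamma^h(t)}$ as $-\langle \kappa^h\vec\omega^h, \vec{\mathcal{V}}^h\rangle^h_{\Gamma^h(t)} = -\langle \kappa^h, \vec{\mathcal{V}}^h\cdot\vec\omega^h\rangle^h_{\Gamma^h(t)}$. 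Since the mass-lumped product only sees vertex values, this equals $-\langle \kappa^h, \pi_{\Gamma^h(t)}[\vec{\mathcal{V}}^h\cdot\vec\omega^h]\rangle^h_{\Gamma^h(t)}$; testing \eqref{eq:sdMS2} with $\chi = \pi_{\Gamma^h(t)}[\vec{\mathcal{V}}^h\cdot\vec\omega^h]\in\Wht$ then expresses this through $u^h$, and finally testing \eqref{eq:sdMS1} with $\phi = u^h\in S^h$ turns it into $-(\nabla u^h,\nabla u^h)$. Combining with \eqref{eq:dteh} yields \eqref{eq:MSdteh}. A point to watch is that the two $\cdot^{(h)}$ terms appearing in \eqref{eq:sdMS1} and \eqref{eq:sdMS2} carry the same convention, so they cancel identically and a single argument covers both \eqref{eq:sdMS} and \eqref{eq:sdMS}$^h$.

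For volume conservation I would test \eqref{eq:sdMS1} with the admissible constant $\phi\equiv 1\in S^h$: the bulk term $(\nabla u^h,\nabla 1)$ vanishes and one is left with $\langle \pi_{\Gamma^h(t)}[\vec{\mathcal{V}}^h\cdot\vec\omega^h], 1\rangle^{(h)}_{\Gamma^h(t)} = 0$. The crucial observation is that, paired against $1$, the integrand $\pi_{\Gamma^h(t)}[\vec{\mathcal{V}}^h\cdot\vec\omega^h]$ is piecewise linear, so vertex quadrature is exact and both conventions coincide; lumping then lets me drop $\pi_{\Gamma^h(t)}$ and identify $\langle \vec{\mathcal{V}}^h, \vec\omega^h\rangle^h_{\Gamma^h(t)}$, which by the defining property \eqref{eq:omegah} of the vertex normal (with $\vec\varphi = \vec{\mathcal{V}}^h$) equals $\langle \vec{\mathcal{V}}^h, \vec\nu^h\rangle_{\Gamma^h(t)}$. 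By \eqref{eq:dtvh} this is precisely $\ddt\vol(\Omega^h_-(t))$, giving \eqref{eq:MSdtvh}.

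Conformality is essentially immediate once I verify the remark that $\langle \kappa^h\vec\omega^h,\vec\eta\rangle^h_{\Gamma^h(t)} = \langle \kappa^h\vec\nu^h,\vec\eta\rangle^h_{\Gamma^h(t)}$ for all $\vec\eta\in\Vht$. To do so I would insert $\vec\varphi = \pi_{\Gamma^h(t)}[\kappa^h\vec\eta]\in\Vht$ into \eqref{eq:omegah} and use that $\vec\nu^h$ is piecewise constant, so the true integral of the piecewise linear $\vec\varphi$ against $\vec\nu^h$ collapses to the same vertex quadrature that defines the left-hand side. Then \eqref{eq:sdMS3} becomes literally the defining relation \eqref{eq:conformal}, with the given $\kappa^h(\cdot,t)\in\Wht$ supplying the function required in Definition~\ref{def:conformal}; hence $\Gamma^h(t)$ is conformal for every $t\in(0,T]$. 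The $d=2$ dichotomy that neighbouring edges either have equal length or are parallel then follows from the characterization of conformal polygonal curves in \cite[Theorem~62]{bgnreview}.

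I expect the only genuine obstacle to be the bookkeeping with the mass-lumped inner products and the interpolant $\pi_{\Gamma^h(t)}$: justifying that $\pi_{\Gamma^h(t)}$ may be freely inserted or removed inside $\langle\cdot,\cdot\rangle^h_{\Gamma^h(t)}$, that lumping is exact when one factor is the constant $1$ (so the volume argument survives even the non-lumped scheme \eqref{eq:sdMS}), and the $\vec\omega^h\leftrightarrow\vec\nu^h$ identity. None of these is deep, but they are precisely the steps where the argument must be carried out carefully, and keeping the $\cdot^{(h)}$ convention uniform throughout is what allows one proof to dispatch both schemes at once.
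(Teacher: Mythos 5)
Your proposal is correct and follows essentially the same route as the paper: the identical testing procedure ($\phi=u^h$, $\chi=\pi_{\Gamma^h(t)}[\vec{\mathcal{V}}^h\cdot\vec\omega^h]$, $\vec\eta=\vec{\mathcal{V}}^h$) combined with \eqref{eq:dteh} for the energy law, the choice $\phi=1$ together with \eqref{eq:omegah} and \eqref{eq:dtvh} for volume conservation, and the identity $\langle\kappa^h\vec\omega^h,\vec\eta\rangle^h_{\Gamma^h(t)}=\langle\kappa^h\vec\nu^h,\vec\eta\rangle^h_{\Gamma^h(t)}$ plus Theorem~62 of \cite{bgnreview} for conformality. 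Your careful justifications of the quadrature bookkeeping (exactness of vertex quadrature for piecewise linear integrands, insertion/removal of $\pi_{\Gamma^h(t)}$ under lumping) are exactly the facts the paper uses implicitly.
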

\begin{proof}
Choosing $\phi = u^h(\cdot,t) \in S^h$ in \eqref{eq:sdMS1}, 
$\chi = \pi_{\Gamma^h(t)}[\vec{\mathcal{V}}^h \cdot \vec\omega^h] 
\in \Wht$ in \eqref{eq:sdMS2} and
$\vec\eta = \vec{\mathcal{V}}^h(\cdot, t) \in \Vht$ in \eqref{eq:sdMS3} 
gives, on recalling \eqref{eq:dteh}, that
\begin{align*}
\ddt\left|\Gamma^h(t)\right| & 
= \left\langle \nabs\vec\id, \nabs\vec{\mathcal{V}}^h 
\right\rangle_{\Gamma^h(t)} 
= - \left\langle \kappa^h\vec\omega^h,\vec{\mathcal{V}}^h 
\right\rangle_{\Gamma^h(t)}^h 
= - \left\langle \kappa^h , \pi_{\Gamma^h(t)}
\left[\vec{\mathcal{V}}^h \cdot \vec\omega^h\right] 
\right\rangle_{\Gamma^h(t)}^h \\ &
= - \left\langle \pi_{\Gamma^h(t)}
\left[\vec{\mathcal{V}}^h \cdot \vec\omega^h\right],
u^h\right\rangle_{\Gamma^h(t)}^{(h)} 
= - \left(\nabla u^h,\nabla u^h\right) ,
\end{align*}
which implies \eqref{eq:MSdteh}. 
Moreover, choosing $\phi=1$ in \eqref{eq:sdMS1} 
and noting \eqref{eq:omegah}, on recalling \eqref{eq:dtvh}, yields that 
\begin{equation*}
\ddt\vol(\Omega^h_-(t)) 
= \left\langle \vec{\mathcal{V}}^h,\vec\nu^h\right\rangle_{\Gamma^h(t)} 
= \left\langle \vec{\mathcal{V}}^h,\vec\omega^h\right\rangle_{\Gamma^h(t)}^h 
= \left\langle \pi_{\Gamma^h(t)}
\left[\vec{\mathcal{V}}^h \cdot \vec\omega^h\right],
1\right\rangle_{\Gamma^h(t)}^{(h)} 
= \left(\nabla u, \nabla 1\right)
= 0,
\end{equation*}
which is \eqref{eq:MSdtvh}.
Finally, the mesh properties for $\Gamma^h(t)$ follow directly from the side
condition \eqref{eq:sdMS3}, thanks to 
Definition~\ref{def:conformal}
and Theorem~62 in \cite{bgnreview},
on noting that 
$\langle \kappa^h\vec\omega^h,\vec\eta \rangle_{\Gamma^h(t)}^h=
\langle \kappa^h\vec\nu^h,\vec\eta \rangle_{\Gamma^h(t)}^h$.
\end{proof}

The motivation for the choices of numerical quadrature in \eqref{eq:sdMS} is
apparent now. We employ mass lumping for the first term in \eqref{eq:sdMS3} to
ensure the good mesh properties. This in turn enforces the use of mass lumping
in the second term in \eqref{eq:sdMS2}, in order to guarantee stability. 
Finally, for the two bulk-interface integrals we allow a choice between true 
integration and mass lumping, the latter being considerably easier to
implement; see the beginning of Section~\ref{sec:nr} below.

\setcounter{equation}{0}
\section{Fully discrete approximation} \label{sec:fd}
The aim of this section is to introduce a fully practical fully discrete
approximation of \eqref{eq:sdMS}$^{(h)}$ that maintains the structure preserving
properties from Theorem~\ref{thm:sd}.

Let $0 = t_0 < t_1<\ldots<t_M=T$ form a partition of the time interval $[0,T]$
with time steps $\ttau_m= t_{m+1}-t_m$, $m=0,\ldots,M-1$.
The main idea going back to the seminal paper \cite{Dziuk91} is now to 
construct polyhedral hypersurfaces $\Gamma^m$, which approximate the
true continuous solutions $\Gamma(t_m)$, in such a way that for $m\geq 0$
we obtain $\Gamma^{m+1} = \vec X^{m+1}(\Gamma^m)$ for a 
parameterization $\vec X^{m+1} \in \Vhm$. In addition we consider a sequence of
bulk triangulations $\mathcal{T}^m$ with associated finite element spaces
$S^m$, $m=0,\ldots,M-1$, similarly to \eqref{eq:Sh}. 
For motivational purposes, we first recall the linear fully discrete 
approximation of \eqref{eq:sdMS} from \cite{bgnreview}. 

Let the closed polyhedral hypersurface $\Gamma^0$ be an approximation of
$\Gamma(0)$. 
Then, for $m=0,\ldots,M-1$, find 
$(U^{m+1},\vec X^{m+1},\kappa^{m+1}) \in S^m \times \Vhm \times \Whm$ such that
\begin{subequations} \label{eq:DMS}
\begin{align}\label{eq:DMS1}
\left(\nabla U^{m+1}, \nabla\varphi\right) 
- \left\langle \pi_{\Gamma^m}\left[
\frac{\vec X^{m+1}-\vec\id}{\ttau_m} \cdot\vec\omega^m \right], \varphi
\right\rangle_{\Gamma^m}^{(h)} & = 0 \qquad\forall\ \varphi \in S^m,\\
\left\langle U^{m+1},\chi\right\rangle_{\Gamma^m}^{(h)} 
-\left\langle\kappa^{m+1},\chi\right\rangle^h_{\Gamma^m} 
&= 0 \qquad\forall\ \chi \in \Whm ,\label{eq:DMS2}\\
\left\langle \kappa^{m+1}\vec\omega^m,\vec\eta\right\rangle^h_{\Gamma^m} +
  \left\langle\nabs\vec X^{m+1},\nabs\vec\eta\right\rangle_{\Gamma^m}
&= 0
\qquad\forall\ \vec\eta \in \Vhm \label{eq:DMS3}
\end{align}
\end{subequations}
and set $\Gamma^{m+1} = \vec X^{m+1}(\Gamma^m)$. We observe that 
\eqref{eq:DMS} corresponds to \cite[(119)]{bgnreview}, which was first
introduced in \cite[(3.5)]{dendritic}. Under mild conditions on $\Gamma^m$,
existence and uniqueness for the linear system \eqref{eq:DMS}$^{(h)}$ 
can be shown.
Moreover, solutions to \eqref{eq:DMS}$^{(h)}$ are unconditionally stable, see
\cite[Theorem~109]{bgnreview}. However, in general the volume of the interiors
$\Omega^{m+1}_-$ and $\Omega^{m}_-$ of $\Gamma^{m+1}$ and $\Gamma^m$,
respectively, will
differ, meaning that the fully discrete scheme \eqref{eq:DMS}$^{(h)}$ 
is not volume preserving. 
The reason for this behaviour is the explicit approximation
of $\vec\omega^h$ from \eqref{eq:sdMS1} in \eqref{eq:DMS1}. Following the
recent ideas in \cite{BaoZ21}, we now investigate a semi-implicit approximation
of $\vec\omega^h$ which will lead to a volume preserving approximation.

Given a sequence of polyhedral surfaces $(\Gamma^m)_{m=0}^M$, where each
$\Gamma^m$ is defined through its
vertices $\{\vec q^m_k\}_{k=1}^K$ and elements 
$\{\sigma^m_j\}_{j=1}^J$, we define the piecewise-linear-in-time family of
polyhedral surfaces $(\hat\Gamma^h(t))_{t \in[0,T]}$ via
\begin{equation*} 
\hat\Gamma^h(t) = \frac{t_{m+1}-t}{\ttau_m} \Gamma^m +
\frac{t - t_m}{\ttau_m} \Gamma^{m+1}, \qquad t \in [t_m, t_{m+1}],
\end{equation*}
which means that the polyhedral surface $\hat\Gamma^h(t)$ 
is induced by the vertices
\begin{equation*} 
\hat q^h_k(t) = \frac{t_{m+1}-t}{\ttau_m} \vec q^m_k +
\frac{t - t_m}{\ttau_m} \vec q^{m+1}_k, \qquad t \in [t_m, t_{m+1}],
\end{equation*}
for $k=1,\ldots,K$. We note that $\hat\Gamma^h(t_m) = \Gamma^m$,
$m=0,\ldots,M$.
Then it immediately follows from \eqref{eq:Vh} that
\begin{equation*} 
\vec{\mathcal{V}}^h(\cdot, t) = \frac1{\ttau_m} \sum_{k=1}^{K}
\left[\vec q_k^{m+1} - \vec q_k^m \right]\phi^{\hat\Gamma^h(t)}_k
\quad\text{on } \hat\Gamma^h(t),\qquad t \in (t_m, t_{m+1}).
\end{equation*}
On denoting the interior of $\hat\Gamma^h(t)$ by $\hat\Omega_-^h(t)$,
with outer unit normal $\hat\nu^h(t)$,
the fundamental theorem of calculus, together with \eqref{eq:dtvh}, yields
that
\begin{align} \label{eq:volvol}
\vol(\Omega_-^{m+1}) - \vol(\Omega_-^{m}) & =
\vol(\hat\Omega_-^h(t_{m+1})) - \vol(\hat\Omega_-^h(t_{m})) =
\int_{t_m}^{t_{m+1}} \ddt \vol(\hat\Omega_-^h(t)) \dt \nonumber \\ &
=\int_{t_m}^{t_{m+1}} 
\left\langle \vec{\mathcal{V}}^h,\hat\nu^h\right\rangle_{\hat\Gamma^h(t)} \dt
\nonumber \\ &
=\int_{t_m}^{t_{m+1}} \left\langle
\frac1{\ttau_m} \sum_{k=1}^{K}
\left[\vec q_k^{m+1} - \vec q_k^m \right]\phi^{\hat\Gamma^h(t)}_k
 ,\hat\nu^h\right\rangle_{\hat\Gamma^h(t)}\dt
\nonumber \\ &
= \frac1{\ttau_m} \sum_{k=1}^{K}
\left[\vec q_k^{m+1} - \vec q_k^m \right] \cdot
\int_{t_m}^{t_{m+1}} \left( \int_{\hat\Gamma^h(t)}
\phi^{\hat\Gamma^h(t)}_k \hat\nu^h \dH{d-1} \right) \dt
\nonumber \\ &
= \frac1{\ttau_m} \sum_{k=1}^{K}
\left[\vec q_k^{m+1} - \vec q_k^m \right] \cdot
\int_{t_m}^{t_{m+1}} \left( \sum_{j=1}^J \int_{\hat\sigma^h_j(t)}
\phi^{\hat\Gamma^h(t)}_k \hat\nu^h \dH{d-1} \right) \dt 
\nonumber \\ &
= \frac1{\ttau_m} \sum_{k=1}^{K}
\left[\vec q_k^{m+1} - \vec q_k^m \right] \cdot
\int_{t_m}^{t_{m+1}} \left( \sum_{j=1}^J \int_{\sigma^m_j}
\phi^{\Gamma^m}_k \dH{d-1} \hat\nu^h\!\mid_{\hat\sigma^h_j(t)} 
\frac{|\hat\sigma^h_j(t)|}{|\sigma^m_j|} \right) \dt \nonumber \\ &
= \frac1{\ttau_m} \int_{t_m}^{t_{m+1}} \left( \sum_{j=1}^J \int_{\sigma^m_j}
\vec X^{m+1} - \vec\id \dH{d-1} \cdot
\hat\nu^h\!\mid_{\hat\sigma^h_j(t)} 
\frac{|\hat\sigma^h_j(t)|}{|\sigma^m_j|} \right) \dt\nonumber \\ &
= \sum_{j=1}^J \int_{\sigma^m_j}
\vec X^{m+1} - \vec\id \dH{d-1} \cdot
\frac1{\ttau_m |\sigma^m_j|} \int_{t_m}^{t_{m+1}} 
\hat\nu^h\!\mid_{\hat\sigma^h_j(t)} |\hat\sigma^h_j(t)| \dt,
\end{align}
where we have used the previously introduced notation 
$\vec X^{m+1} = \sum_{k=1}^K \phi^{\Gamma^m}_k \vec q^{m+1}_k \in \Vhm$.
The calculation in \eqref{eq:volvol} suggests the definition of the piecewise
constant vector $\vec\nu^{m+\frac12} \in [L^\infty(\Gamma^m)]^d$ by setting
\begin{equation} \label{eq:nuhalf}
 \vec\nu^{m+\frac12} =
\frac1{\ttau_m |\sigma^m_j|} \int_{t_m}^{t_{m+1}} 
\hat\nu^h\!\mid_{\hat\sigma^h_j(t)} |\hat\sigma^h_j(t)| \dt
\qquad\text{on } \sigma_j^m, \quad j = 1,\ldots,J .
\end{equation}
We note that $\vec\nu^{m+\frac12}$ can be interpreted as an averaged normal
vector for the linearly interpolated surfaces between $\Gamma^m$ and
$\Gamma^{m+1}$. Note also that in general $\vec\nu^{m+\frac12}$ will not have
unit length. Overall we have proven the following result, which generalizes
the corresponding results from Theorems~2.1 and 3.1 in
\cite{BaoZ21} to the case $d\geq2$.

\begin{lemma} \label{lem:volvol}
It holds that
\[
\vol(\Omega_-^{m+1}) - \vol(\Omega_-^{m}) 
= \left\langle \vec X^{m+1} - \vec\id , \vec\nu^{m+\frac12}
\right\rangle_{\Gamma^m}.
\]
\end{lemma}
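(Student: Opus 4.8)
The plan is to exploit the piecewise-linear-in-time interpolant $\widehat\Gamma^h(t)$ together with the continuous-in-time volume rate formula \eqref{eq:dtvh}, so that the purely discrete volume difference is recovered by a single application of the fundamental theorem of calculus. Since $\widehat\Gamma^h(t_m)=\Gamma^m$ and $\widehat\Gamma^h(t_{m+1})=\Gamma^{m+1}$, the left-hand side equals $\vol(\widehat\Omega_-^h(t_{m+1}))-\vol(\widehat\Omega_-^h(t_m))=\int_{t_m}^{t_{m+1}}\ddt\vol(\widehat\Omega_-^h(t))\dt$, and \eqref{eq:dtvh} rewrites the integrand as $\langle\vec{\mathcal{V}}^h,\widehat\nu^h\rangle_{\widehat\Gamma^h(t)}$. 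These are exactly the first equalities of the displayed calculation \eqref{eq:volvol}.

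Next I would substitute the vertex expansion of the velocity following from \eqref{eq:Vh}, namely $\vec{\mathcal{V}}^h(\cdot,t)=\frac1{\ttau_m}\sum_{k=1}^K[\vec q_k^{m+1}-\vec q_k^m]\,\phi^{\widehat\Gamma^h(t)}_k$, pull the constant vectors $\vec q_k^{m+1}-\vec q_k^m$ out of the spatial inner product, and split the integral over $\widehat\Gamma^h(t)$ into the element contributions $\int_{\widehat\sigma_j^h(t)}\phi^{\widehat\Gamma^h(t)}_k\,\widehat\nu^h\dH{d-1}$. The key technical step is then to transform each of these integrals back onto the fixed reference element $\sigma_j^m$. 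The clean point that makes this work is that the integral of a piecewise-linear hat basis function over any $(d-1)$-simplex equals $|\sigma|/d$ irrespective of the simplex's shape; hence $\int_{\widehat\sigma_j^h(t)}\phi^{\widehat\Gamma^h(t)}_k\dH{d-1}$ and $\int_{\sigma_j^m}\phi^{\Gamma^m}_k\dH{d-1}$ differ only by the area ratio $|\widehat\sigma_j^h(t)|/|\sigma_j^m|$, while $\widehat\nu^h$ is constant on each planar element $\widehat\sigma_j^h(t)$ and so factors out of the integral. This is the content of the change-of-variables step in \eqref{eq:volvol}, and I expect it to be the main obstacle: one must keep careful track of the moving-geometry area factor and verify that the normal really is piecewise constant within each time slice.

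Having done this, reassembling $\sum_{k=1}^K[\vec q_k^{m+1}-\vec q_k^m]\,\phi^{\Gamma^m}_k=\vec X^{m+1}-\vec\id$ on $\Gamma^m$ collapses the double sum into the single term $\frac1{\ttau_m}\int_{t_m}^{t_{m+1}}\sum_{j=1}^J\int_{\sigma_j^m}(\vec X^{m+1}-\vec\id)\dH{d-1}\cdot\widehat\nu^h\!\mid_{\widehat\sigma_j^h(t)}\frac{|\widehat\sigma_j^h(t)|}{|\sigma_j^m|}\dt$, and swapping the (well-behaved) time and surface integrals isolates on each element the time average $\frac1{\ttau_m|\sigma_j^m|}\int_{t_m}^{t_{m+1}}\widehat\nu^h\!\mid_{\widehat\sigma_j^h(t)}|\widehat\sigma_j^h(t)|\dt$. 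Finally I would recognise this quantity as the piecewise-constant vector $\vec\nu^{m+\frac12}$ defined in \eqref{eq:nuhalf}. Since $\vec\nu^{m+\frac12}$ is constant on each $\sigma_j^m$, the remaining sum over elements is precisely the $L^2$-inner product $\langle\vec X^{m+1}-\vec\id,\vec\nu^{m+\frac12}\rangle_{\Gamma^m}$, which establishes the claim.
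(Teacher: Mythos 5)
Your proposal is correct and takes essentially the same route as the paper: the paper's proof of Lemma~\ref{lem:volvol} simply invokes the calculation \eqref{eq:volvol} together with the definition \eqref{eq:nuhalf}, and that calculation consists exactly of the steps you describe --- the fundamental theorem of calculus applied to the piecewise-linear-in-time interpolant, the rate formula \eqref{eq:dtvh}, the vertex expansion of $\vec{\mathcal{V}}^h$ from \eqref{eq:Vh}, the per-element change of variables using the constancy of $\hat\nu^h$ on each planar element and the area-ratio scaling of the hat-function integrals, and the final identification of the time average as $\vec\nu^{m+\frac12}$. Your observation that the hat-function integral equals $|\sigma|/d$ independently of the simplex's shape is precisely what justifies the paper's change-of-variables step in \eqref{eq:volvol}.
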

\begin{proof}
The desired result follows immediately from \eqref{eq:volvol} and the
definition \eqref{eq:nuhalf}.  
\end{proof}

\begin{remark} \label{rem:nuhalf}
In practice, given $\Gamma^m$ and $\Gamma^{m+1}$, the vector
$\vec\nu^{m+\frac12}$ is remarkably easy to compute, since the integrand in
\eqref{eq:nuhalf} is a polynomial of degree $d-1$. In particular, it holds that
\[
\vec\nu^{m+\frac12}\!\mid_{\sigma^m_j} =
\frac1{\ttau_m} 
\int_{t_m}^{t_{m+1}} 
\frac{(\hat q^h_{j,2}(t)-\hat q^h_{j,1}(t)) \land \cdots \land
(\hat q^h_{j,d}(t)-\hat q^h_{j,1}(t))}{
|(\vec q^m_{j,2}-\vec q^m_{j,1}) \land
\cdots \land (\vec q^m_{j,d}-\vec q^m_{j,1})|}
 \dt ,
\]
where we have recalled \eqref{eq:nuh} and \eqref{eq:detsigma}. Using suitable
quadrature rules then yields in the case $d=2$ that
\begin{equation} \label{eq:nuhalf2}
\vec\nu^{m+\frac12}\!\mid_{\sigma^m_j}
= \frac12 \frac{(\vec q^m_{j,2}-\vec q^m_{j,1} 
+ \vec q^{m+1}_{j,2}-\vec q^{m+1}_{j,1})^\perp}{
|\vec q^m_{j,2}-\vec q^m_{j,1}|},
\end{equation}
where $\cdot^{\perp}$ denotes the anti-clockwise rotation 
through $\frac\pi2$ of a vector in $\bR^2$,
while for $d=3$ we obtain
\begin{align} \label{eq:nuhalf3}
\vec\nu^{m+\frac12}\!\mid_{\sigma^m_j}
& = \frac16 \frac{(\vec q^m_{j,2}-\vec q^m_{j,1}) \times
(\vec q^m_{j,3}-\vec q^m_{j,1})+(\vec q^{m+1}_{j,2}-\vec q^{m+1}_{j,1}) \times
(\vec q^{m+1}_{j,3}-\vec q^{m+1}_{j,1})}{
|(\vec q^m_{j,2}-\vec q^m_{j,1}) \times
 (\vec q^m_{j,3}-\vec q^m_{j,1})|} \nonumber \\ & \quad
+ \frac16 \frac{(\vec q^m_{j,2} - \vec q^m_{j,1} 
+ \vec q^{m+1}_{j,2}  - \vec q^{m+1}_{j,1}) \times
(\vec q^m_{j,3} - \vec q^m_{j,1} + \vec q^{m+1}_{j,3} - \vec q^{m+1}_{j,1})}{
|(\vec q^m_{j,2}-\vec q^m_{j,1}) \times
 (\vec q^m_{j,3}-\vec q^m_{j,1})|}
.
\end{align}
\end{remark}

Before we can apply the result from Lemma~\ref{lem:volvol} to the approximation
\eqref{eq:DMS}$^{(h)}$, 
we need to introduce a vertex based normal corresponding to
$\vec\nu^{m+\frac12}$. Analogously to \eqref{eq:omegah} we therefore define
$\vec\omega^{m+\frac12} \in \Vhm$ such that
\begin{equation} \label{eq:omegahalf}
\left\langle \vec\omega^{m+\frac12}, \vec\varphi \right\rangle_{\Gamma^m}^h 
= \left\langle \vec\nu^{m+\frac12}, \vec\varphi \right\rangle_{\Gamma^m}
\quad\forall\ \vec\varphi\in\Vhm.
\end{equation}

Now our novel fully discrete approximation of \eqref{eq:sdMS}$^{(h)}$ 
is given as follows. 

Let the closed polyhedral hypersurface $\Gamma^0$ be an approximation of
$\Gamma(0)$. 
Then, for $m=0,\ldots,M-1$, find 
$(U^{m+1},\vec X^{m+1},\kappa^{m+1}) \in S^m \times \Vhm \times \Whm$ 
and $\Gamma^{m+1} = \vec X^{m+1}(\Gamma^m)$ such that
\begin{subequations} \label{eq:fdMS}
\begin{align}\label{eq:fdMS1}
\left(\nabla U^{m+1}, \nabla\varphi\right) 
- \left\langle \pi_{\Gamma^m}\left[
\frac{\vec X^{m+1}-\vec\id}{\ttau_m} \cdot\vec\omega^{m+\frac12}\right],\varphi
\right\rangle_{\Gamma^m}^{(h)} & = 0 \qquad\forall\ \varphi \in S^m,\\
\left\langle U^{m+1},\chi\right\rangle_{\Gamma^m}^{(h)} 
-\left\langle\kappa^{m+1},\chi\right\rangle^h_{\Gamma^m} 
&= 0 \qquad\forall\ \chi \in \Whm ,\label{eq:fdMS2}\\
\left\langle \kappa^{m+1}\vec\omega^{m+\frac12},
\vec\eta\right\rangle^h_{\Gamma^m} +
\left\langle\nabs\vec X^{m+1},\nabs\vec\eta\right\rangle_{\Gamma^m}
&= 0
\qquad\forall\ \vec\eta \in \Vhm. \label{eq:fdMS3}
\end{align}
\end{subequations}
We note that in contrast to
\eqref{eq:DMS}$^{(h)}$, the scheme \eqref{eq:fdMS}$^{(h)}$ 
leads to a system of nonlinear
equations at each time level, because $\vec\omega^{m+\frac12}$ depends on 
$\vec X^{m+1}$.

The next theorem proves the structure preserving properties of the fully
discrete approximation \eqref{eq:fdMS}$^{(h)}$.

\begin{theorem} \label{thm:HDMS}
Let $(U^{m+1}, \vec X^{m+1},\kappa^{m+1})\in
S^m\times \Vhm\times \Whm$ be a solution to \eqref{eq:fdMS}$^{(h)}$. 
Then the enclosed volume is preserved, i.e.\
\begin{equation} \label{eq:thmvol}
\vol(\Omega_-^{m+1}) = \vol(\Omega_-^{m}) .
\end{equation}
In addition, if $d=2$ or $d=3$, then the solution satisfies
the stability estimate
\begin{equation} \label{eq:thmstab}
|\Gamma^{m+1}| + \ttau_m \left(\nabla U^{m+1}, \nabla U^{m+1} \right)
\leq |\Gamma^m|.
\end{equation}
\end{theorem}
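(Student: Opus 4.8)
The plan is to reproduce, on the fully discrete level, the two testing procedures that yield the continuous identities \eqref{eq:dtv} and \eqref{eq:dte}, exactly as was done for the semidiscrete scheme in Theorem~\ref{thm:sd}, with the averaged normal $\vec\nu^{m+\frac12}$ and Lemma~\ref{lem:volvol} now supplying the \emph{exact} volume identity.

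For the volume statement \eqref{eq:thmvol} I would first choose $\varphi = 1$ in \eqref{eq:fdMS1}. Since $\nabla 1 = 0$, this forces
\[
\langle \pi_{\Gamma^m}[\tfrac{\vec X^{m+1}-\vec\id}{\ttau_m}\cdot\vec\omega^{m+\frac12}], 1\rangle_{\Gamma^m}^{(h)} = 0.
\]
Because the second argument is the constant $1$ and the first argument lies in $\Whm$, true and mass-lumped integration coincide (vertex quadrature is exact for affine integrands), so the superscript $(h)$ may be read as $h$; and since the mass-lumped product only sees vertex values, the operator $\pi_{\Gamma^m}$ is immaterial and the expression equals $\langle \vec\omega^{m+\frac12}, \tfrac{\vec X^{m+1}-\vec\id}{\ttau_m}\rangle^h_{\Gamma^m}$. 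Testing the defining relation \eqref{eq:omegahalf} with $\vec\varphi = \tfrac{\vec X^{m+1}-\vec\id}{\ttau_m} \in \Vhm$ turns this into $\tfrac{1}{\ttau_m}\langle \vec\nu^{m+\frac12}, \vec X^{m+1}-\vec\id\rangle_{\Gamma^m}$, which by Lemma~\ref{lem:volvol} equals $\tfrac{1}{\ttau_m}(\vol(\Omega_-^{m+1}) - \vol(\Omega_-^{m}))$. Hence this difference vanishes, giving \eqref{eq:thmvol}. This step is essentially bookkeeping, and it is precisely where the semi-implicit construction of $\vec\omega^{m+\frac12}$ pays off.

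For the stability estimate \eqref{eq:thmstab} I would test in the now-standard energy way: $\varphi = U^{m+1}$ in \eqref{eq:fdMS1}, $\chi = \pi_{\Gamma^m}[\tfrac{\vec X^{m+1}-\vec\id}{\ttau_m}\cdot\vec\omega^{m+\frac12}] \in \Whm$ in \eqref{eq:fdMS2}, and $\vec\eta = \tfrac{\vec X^{m+1}-\vec\id}{\ttau_m} \in \Vhm$ in \eqref{eq:fdMS3}. Using symmetry of the inner products, and again that the mass-lumped product does not distinguish $\langle \kappa^{m+1}\vec\omega^{m+\frac12}, \tfrac{\vec X^{m+1}-\vec\id}{\ttau_m}\rangle^h_{\Gamma^m}$ from $\langle \kappa^{m+1}, \pi_{\Gamma^m}[\tfrac{\vec X^{m+1}-\vec\id}{\ttau_m}\cdot\vec\omega^{m+\frac12}]\rangle^h_{\Gamma^m}$, the three identities chain together to eliminate both $U^{m+1}$ and $\kappa^{m+1}$, leaving
\[
\ttau_m\left(\nabla U^{m+1}, \nabla U^{m+1}\right) = -\langle \nabs\vec X^{m+1}, \nabs(\vec X^{m+1}-\vec\id)\rangle_{\Gamma^m}.
\]

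The crux, and the step I expect to be the main obstacle, is then the purely geometric inequality
\[
\langle \nabs\vec X^{m+1}, \nabs(\vec X^{m+1}-\vec\id)\rangle_{\Gamma^m} \geq |\Gamma^{m+1}| - |\Gamma^m|,
\]
which, substituted above, immediately yields $|\Gamma^{m+1}| + \ttau_m(\nabla U^{m+1}, \nabla U^{m+1}) \leq |\Gamma^m|$. This is exactly where the restriction $d\in\{2,3\}$ enters, and I would establish it elementwise. On each simplex $\sigma^m_j$ the affine map $\vec X^{m+1}$ has a constant surface gradient whose singular values $\lambda_1,\ldots,\lambda_{d-1}\geq 0$ satisfy $|\sigma^{m+1}_j| = (\prod_i\lambda_i)\,|\sigma^m_j|$ and $|\nabs\vec X^{m+1}|^2 = \sum_i\lambda_i^2$ on $\sigma^m_j$, while $\nabs\vec\id$ is the isometric inclusion of the tangent plane, so that $\nabs\vec X^{m+1} : \nabs\vec\id \leq \sum_i\lambda_i$ by the Cauchy--Schwarz (von Neumann trace) inequality. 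The claim thus reduces to the pointwise polynomial inequality $\sum_i\lambda_i^2 - \sum_i\lambda_i \geq \prod_i\lambda_i - 1$, which for $d=2$ is $(\lambda_1-1)^2 \geq 0$ and for $d=3$ follows because $\lambda_1^2 + \lambda_2^2 - \lambda_1\lambda_2 - \lambda_1 - \lambda_2 + 1$ has nonpositive discriminant as a quadratic in each variable. For $d\geq 4$ this elementary estimate is no longer available, which explains the hypothesis. Summing over $j$ then completes the argument.
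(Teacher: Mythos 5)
Your proof is correct and follows the same route as the paper: the volume identity \eqref{eq:thmvol} is obtained exactly as in the paper by taking $\varphi=1$ in \eqref{eq:fdMS1}, using \eqref{eq:omegahalf} and Lemma~\ref{lem:volvol}, and the stability bound comes from the same three test choices (your scaling of $\vec\eta$ by $1/\ttau_m$ is immaterial), reducing everything to the inequality $\langle\nabs\vec X^{m+1},\nabs(\vec X^{m+1}-\vec\id)\rangle_{\Gamma^m}\geq|\Gamma^{m+1}|-|\Gamma^m|$ for $d=2,3$. The one genuine difference is that the paper simply cites this inequality as \eqref{eq:stab2d3d} from Lemma~57 of the review article \cite{bgnreview}, whereas you prove it from scratch: your elementwise singular-value argument (von Neumann trace inequality, then $\sum_i\lambda_i^2-\sum_i\lambda_i\geq\prod_i\lambda_i-1$, which for $d=3$ is $\tfrac12[(\lambda_1-\lambda_2)^2+(\lambda_1-1)^2+(\lambda_2-1)^2]\geq0$) is valid and is essentially the standard proof behind the cited lemma, so your write-up has the minor advantage of being self-contained, at the cost of reproving known material.
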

\begin{proof} 
On choosing $\varphi=1$ in \eqref{eq:fdMS1}, it follows from
\eqref{eq:omegahalf} and Lemma~\ref{lem:volvol} that
\begin{align*}
0 & = \left\langle \pi_{\Gamma^m}\left[
\frac{\vec X^{m+1}-\vec\id}{\ttau_m} \cdot\vec\omega^{m+\frac12}\right],1
\right\rangle_{\Gamma^m}^{(h)}
= \left\langle \frac{\vec X^{m+1}-\vec\id}{\ttau_m}, \vec\omega^{m+\frac12}
\right\rangle_{\Gamma^m}^h
=\left\langle \frac{\vec X^{m+1}-\vec\id}{\ttau_m}, \vec\nu^{m+\frac12}
\right\rangle_{\Gamma^m} \\ &
= \frac1{\ttau_m} \left(\vol(\Omega_-^{m+1}) - \vol(\Omega_-^{m})\right) .
\end{align*}
This proves \eqref{eq:thmvol}. 
It remains to prove the stability bound. Here we choose
$\varphi= U^{m+1}$ in \eqref{eq:fdMS1}, 
$\chi = \pi_{\Gamma^m}[(\vec X^{m+1}-\vec\id)\cdot\vec\omega^{m+\frac12}]$ 
in \eqref{eq:fdMS2} and 
$\vec\eta = \vec X^{m+1}-\vec\id_{\mid_{\Gamma^m}}$ in \eqref{eq:fdMS3} 
in order to obtain
\begin{equation} \label{eq:step1}
\ttau_m\left(\nabla U^{m+1}, \nabla U^{m+1} \right) + 
\left\langle\nabs\vec X^{m+1},\nabs(\vec X^{m+1}-\vec\id) 
\right\rangle_{\Gamma^m} =0.
\end{equation}
Now we recall from Lemma~57 in \cite{bgnreview} the well-known bound
\begin{equation} \label{eq:stab2d3d}
\left\langle\nabs\vec X^{m+1},\nabs(\vec X^{m+1}-\vec\id) 
\right\rangle_{\Gamma^m} \geq |\Gamma^{m+1}| - |\Gamma^m|
\end{equation}
for the cases $d=2$ and $d=3$. Combining \eqref{eq:step1} and
\eqref{eq:stab2d3d} yields the desired result \eqref{eq:thmstab}. 
\end{proof}

In practice the system of nonlinear equations \eqref{eq:fdMS}$^{(h)}$ 
can be solved
with a simple lagged iteration. Given $\Gamma^m$, let $\Gamma^{m+1,0} =
\Gamma^m$. Then for $i \geq 0$ define $\vec\omega^{m+\frac12,i} \in \Vhm$
through \eqref{eq:omegahalf} and \eqref{eq:nuhalf}, but with $\Gamma^{m+1}$
replaced by $\Gamma^{m+1,i}$, and find
$(U^{m+1,i+1},\vec X^{m+1,i+1},\kappa^{m+1,i+1}) 
\in S^m \times \Vhm \times \Whm$ 
such that
\begin{subequations} \label{eq:itMS}
\begin{align}\label{eq:itMS1}
\left(\nabla U^{m+1,i+1}, \nabla\varphi\right) 
- \left\langle \pi_{\Gamma^m}\left[
\frac{\vec X^{m+1,i+1}-\vec\id}{\ttau_m} \cdot\vec\omega^{m+\frac12,i}\right],
\varphi \right\rangle_{\Gamma^m}^{(h)} & = 0 \qquad\forall\ \varphi \in S^m,\\
\left\langle U^{m+1,i+1},\chi\right\rangle_{\Gamma^m}^{(h)} 
-\left\langle\kappa^{m+1,i+1},\chi\right\rangle^h_{\Gamma^m} 
&= 0 \qquad\forall\ \chi \in \Whm ,\label{eq:itMS2}\\
\left\langle \kappa^{m+1,i+1}\vec\omega^{m+\frac12,i},
\vec\eta\right\rangle^h_{\Gamma^m} +
\left\langle\nabs\vec X^{m+1,i+1},\nabs\vec\eta\right\rangle_{\Gamma^m}
&= 0
\qquad\forall\ \vec\eta \in \Vhm \label{eq:itMS3}
\end{align}
\end{subequations}
and set $\Gamma^{m+1,i+1} = \vec X^{m+1,i+1}(\Gamma^m)$. The iteration can be
repeated until the stopping criterion
\begin{equation} \label{eq:stop}
\| \vec X^{m+1,i+1} - \vec X^{m+1,i} \|_\infty \leq \text{tol}
\end{equation}
is satisfied. Note that the existence of a unique solution to the linear 
system of equations \eqref{eq:itMS}$^{(h)}$, which is of the same form as 
\eqref{eq:DMS}$^{(h)}$, 
can be shown under mild assumptions on $\Gamma^m$, recall Theorem~109
in \cite{bgnreview}. 

\setcounter{equation}{0}
\section{Generalization to anisotropic surface energies} \label{sec:ani}

In this section we briefly discuss the extension of the finite element
approximation \eqref{eq:fdMS}$^{(h)}$ to the case of an anisotropic surface energy of
the form \eqref{eq:Egamma}, i.e.\
\[
|\Gamma(t)|_\gamma = \int_{\Gamma(t)} \gamma(\vec\nu) \dH{d-1}.
\]
On defining the anisotropic curvature through
\[
\varkappa_\gamma = - \nabs \cdot \gamma'(\vec\nu) \quad \text{on } \Gamma(t),
\]
where $\gamma'$ denotes the spatial gradient of $\gamma : \bR^d \to \bR$,
which itself is defined as a one-homogeneous extension of the originally given
density on the unit ball, we introduce the anisotropic analogue of 
\eqref{eq:MS} via
\begin{equation} \label{eq:aniMS}
-\Delta u =0 \quad\!\! \text{in } \Omega \setminus \Gamma(t),\quad
u = \varkappa_\gamma \quad\!\! \text{on } \Gamma(t),\quad
\left[\frac{\partial u}{\partial{\vec\nu}}\right]_{\Gamma(t)}
 = -\mathcal{V}\quad\!\! \text{on } \Gamma(t),\quad
\frac{\partial u}{\partial {\vec\nu_\Omega}} = 0 \quad\!\! \text{on } 
\partial\Omega.
\end{equation}
{From} now on we are going to restrict ourselves to a class of anisotropies
first proposed in \cite{triplejANI,ani3d}. That is, we assume that the
anisotropy can be written as 
\begin{equation} \label{eq:g}
\gamma(\vec p) = \left(
\sum_{\ell=1}^L [G_{\ell}\vec p \cdot \vec p]^{\frac r2}\right)^\frac1r,
\end{equation}
where $r\in[1,\infty)$ and $G_{\ell} \in \bR^{d\times d}$, 
$\ell=1,\ldots, L$, are symmetric and positive definite.
We also define $\tG_\ell = [\det G_\ell]^{\frac{1}{d-1}} G_\ell^{-1}$ for
$\ell=1,\ldots, L$.
Using a suitable differential calculus, the authors in \cite{ani3d} then
derived the following anisotropic analogue of \eqref{eq:varkappa}  
\begin{equation*} 
\left\langle \varkappa_\gamma \vec\nu,\vec\eta \right\rangle_{\Gamma(t)}
+ \left\langle 
\nabs^{\tG}\vec\id, \nabs^{\tG}\vec\eta \right\rangle_{\Gamma(t),\gamma}
=0 \quad\forall\ \vec\eta \in [H^1(\Gamma(t))]^d,
\end{equation*}
see \cite{ani3d} and also \cite[(110)]{bgnreview} for the precise definitions.
Hence the natural anisotropic analogue of \eqref{eq:WMS} is given by
\begin{subequations} \label{eq:aniWMS}
\begin{align} \label{eq:aniWMS1}
\left(\nabla u,\nabla\phi\right) -
\left\langle \vec{\mathcal{V}},\phi\vec\nu \right\rangle_{\Gamma(t)} & = 0
\qquad\forall\ \phi\in H^1(\Omega),\\
\left\langle u - \varkappa_\gamma,\chi \right\rangle_{\Gamma(t)} 
& = 0 \qquad\forall\ \chi\in L^2(\Gamma(t))\label{eq:aniWMS2},\\
\left\langle \varkappa_\gamma \vec\nu,\vec\eta \right\rangle_{\Gamma(t)}
+ \left\langle 
\nabs^{\tG}\vec\id, \nabs^{\tG}\vec\eta \right\rangle_{\Gamma(t),\gamma}
& =0 \qquad
\forall\ \vec\eta\in [H^1(\Gamma(t))]^d. \label{eq:aniWMS3}
\end{align}
\end{subequations}
The same testing procedure as in the isotropic setting shows that solutions to
\eqref{eq:aniWMS} satisfy
\begin{equation} \label{eq:anistruct}
\ddt |\Gamma(t)|_\gamma 
= - \left\langle \varkappa_\gamma, \mathcal{V} \right\rangle_{\Gamma(t)}
= -( \nabla u, \nabla u)\leq 0
\quad \text{and} \quad
\ddt \vol(\Omega_-(t)) = 0,
\end{equation}
where in the first equation we have noted Lemma~97 from \cite{bgnreview}. 

For the adaptation of \eqref{eq:fdMS}$^{(h)}$ to the anisotropic setting 
we make use of
the stable discretization of \eqref{eq:aniWMS3} introduced in \cite{ani3d}. To
this end, we define 
\begin{equation} \label{eq:ipGG}
\left\langle \nabs^{\tG_{\ell}}\vec X^{m+1},
\nabs^{\tG_{\ell}}\vec\eta \right\rangle_{\Gamma^m,\gamma} 
=\sum_{\ell=1}^L \int_{\Gamma^m} \left[
\frac{\gamma_\ell(\vec\nu^{m+1} \circ \vec X^{m+1})}
{\gamma(\vec\nu^{m+1}\circ \vec X^{m+1})} \right]^{r-1}
(\nabs^{\tG_{\ell}}\vec X^{m+1},\nabs^{\tG_{\ell}}\vec\eta)_{\tG_\ell}
\gamma_{\ell}(\vec\nu^m) \dH{d-1} 
\end{equation}
for $\Gamma^{m+1} = \vec X^{m+1}(\Gamma^m)$ with normal $\vec\nu^{m+1}$ and
$\vec X^{m+1}, \vec\eta \in \Vhm$. Here $\nabs^{\tG_{\ell}}$ is a surface
differential operator weighted by $\tG_\ell$, while $(\cdot,\cdot)_{\tG_\ell}$
denotes the inner product in $\bR^d$ induced by the symmetric positive definite
matrix $\tG_\ell$, see (108) and (111) in \cite{bgnreview} for details.
We note that \eqref{eq:ipGG} depends linearly on $\vec X^{m+1}$ if $r=1$.
Then our fully discrete approximation of \eqref{eq:aniMS} is given as follows.

Let the closed polyhedral hypersurface $\Gamma^0$ be an approximation of
$\Gamma(0)$. Then, for $m=0,\ldots,M-1$, find 
$(U^{m+1},\vec X^{m+1},\kappa^{m+1}_\gamma) \in S^m \times \Vhm \times \Whm$ 
and $\Gamma^{m+1} = \vec X^{m+1}(\Gamma^m)$ such that
\begin{subequations} \label{eq:fdaniMS}
\begin{align}\label{eq:fdaniMS1}
\left(\nabla U^{m+1}, \nabla\varphi\right) 
- \left\langle \pi_{\Gamma^m}\left[
\frac{\vec X^{m+1}-\vec\id}{\ttau_m} \cdot\vec\omega^{m+\frac12}\right],\varphi
\right\rangle_{\Gamma^m}^{(h)} & = 0 \qquad\forall\ \varphi \in S^m,\\
\left\langle U^{m+1},\chi\right\rangle_{\Gamma^m}^{(h)} 
-\left\langle\kappa^{m+1}_\gamma,\chi\right\rangle^h_{\Gamma^m} 
&= 0 \qquad\forall\ \chi \in \Whm ,\label{eq:fdaniMS2}\\
\left\langle \kappa^{m+1}_\gamma\vec\omega^{m+\frac12},
\vec\eta\right\rangle^h_{\Gamma^m}
+ \left\langle \nabs^{\tG_{\ell}}\vec X^{m+1},
\nabs^{\tG_{\ell}}\vec\eta \right\rangle_{\Gamma^m,\gamma}
&= 0
\qquad\forall\ \vec\eta \in \Vhm. \label{eq:fdaniMS3}
\end{align}
\end{subequations}
Once again, \eqref{eq:fdaniMS}$^{(h)}$ is a structure preserving 
approximation, in that
its solution satisfy discrete analogues of \eqref{eq:anistruct}. 

\begin{theorem} 
Let $(U^{m+1}, \vec X^{m+1},\kappa^{m+1}_\gamma)\in
S^m\times \Vhm\times \Whm$ be a solution to \eqref{eq:fdaniMS}$^{(h)}$. 
Then the enclosed volume is preserved, i.e.\
$\vol(\Omega_-^{m+1}) = \vol(\Omega_-^{m})$.
In addition, if $d=2$ or $d=3$, then the solution satisfies
the stability estimate
\begin{equation} \label{eq:thmanistab}
|\Gamma^{m+1}|_\gamma + \ttau_m \left(\nabla U^{m+1}, \nabla U^{m+1} \right)
\leq |\Gamma^m|_\gamma.
\end{equation}
\end{theorem}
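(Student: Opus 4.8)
The plan is to mirror the proof of Theorem~\ref{thm:HDMS} almost verbatim, exploiting the crucial observation that the first two equations of the anisotropic scheme \eqref{eq:fdaniMS}$^{(h)}$ are literally identical to those of the isotropic scheme \eqref{eq:fdMS}$^{(h)}$; only the side condition \eqref{eq:fdaniMS3} differs, through the replacement of the term $\langle\nabs\vec X^{m+1},\nabs\vec\eta\rangle_{\Gamma^m}$ by the weighted anisotropic form $\langle\nabs^{\tG_{\ell}}\vec X^{m+1},\nabs^{\tG_{\ell}}\vec\eta\rangle_{\Gamma^m,\gamma}$ from \eqref{eq:ipGG}. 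Consequently the volume conservation argument will carry over without any change, and only the stability estimate will require a genuinely anisotropic ingredient.

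For the volume conservation I would choose $\varphi=1$ in \eqref{eq:fdaniMS1}. Since this equation coincides with \eqref{eq:fdMS1}, the same chain of identities as in Theorem~\ref{thm:HDMS} applies: using \eqref{eq:omegahalf} to pass from $\vec\omega^{m+\frac12}$ to $\vec\nu^{m+\frac12}$ and then invoking Lemma~\ref{lem:volvol} yields $0 = \ttau_m^{-1}\left(\vol(\Omega_-^{m+1}) - \vol(\Omega_-^{m})\right)$, which is the asserted $\vol(\Omega_-^{m+1}) = \vol(\Omega_-^{m})$.

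For the stability bound the plan is to test with the same three functions as before, namely $\varphi = U^{m+1}$ in \eqref{eq:fdaniMS1}, $\chi = \pi_{\Gamma^m}[(\vec X^{m+1}-\vec\id)\cdot\vec\omega^{m+\frac12}]$ in \eqref{eq:fdaniMS2}, and $\vec\eta = \vec X^{m+1}-\vec\id$ in \eqref{eq:fdaniMS3}. The three bulk--interface couplings cancel in exactly the manner leading to \eqref{eq:step1}, since the cancellation only uses the first two (unchanged) equations together with the mass-lumped identity $\langle\kappa^{m+1}_\gamma,\pi_{\Gamma^m}[(\vec X^{m+1}-\vec\id)\cdot\vec\omega^{m+\frac12}]\rangle^h_{\Gamma^m} = \langle\kappa^{m+1}_\gamma\vec\omega^{m+\frac12},\vec X^{m+1}-\vec\id\rangle^h_{\Gamma^m}$. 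This leaves the anisotropic analogue of \eqref{eq:step1},
\[
\ttau_m(\nabla U^{m+1},\nabla U^{m+1}) + \langle\nabs^{\tG_{\ell}}\vec X^{m+1}, \nabs^{\tG_{\ell}}(\vec X^{m+1}-\vec\id)\rangle_{\Gamma^m,\gamma} = 0 .
\]

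The main obstacle is the anisotropic counterpart of the monotonicity estimate \eqref{eq:stab2d3d}, namely the bound
\[
\langle\nabs^{\tG_{\ell}}\vec X^{m+1}, \nabs^{\tG_{\ell}}(\vec X^{m+1}-\vec\id)\rangle_{\Gamma^m,\gamma} \geq |\Gamma^{m+1}|_\gamma - |\Gamma^m|_\gamma ,
\]
valid for $d=2$ and $d=3$. This is precisely the stability property established for the discretization \eqref{eq:ipGG} in \cite{ani3d}, and it hinges on the special structure \eqref{eq:g} of the admissible anisotropies. Granting this bound and combining it with the displayed identity immediately delivers \eqref{eq:thmanistab}, which completes the proof. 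Thus the only nontrivial step beyond the isotropic case is the invocation of the anisotropic stability inequality from \cite{ani3d}; everything else is a direct transcription of Theorem~\ref{thm:HDMS}.
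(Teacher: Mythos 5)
Your proposal is correct and follows essentially the same route as the paper: choosing $\varphi=1$ for volume conservation, then testing with $\varphi=U^{m+1}$, $\chi=\pi_{\Gamma^m}[(\vec X^{m+1}-\vec\id)\cdot\vec\omega^{m+\frac12}]$ and $\vec\eta=\vec X^{m+1}-\vec\id$ to obtain the anisotropic analogue of \eqref{eq:step1}, and concluding with the monotonicity bound $\langle\nabs^{\tG_\ell}\vec X^{m+1},\nabs^{\tG_\ell}(\vec X^{m+1}-\vec\id)\rangle_{\Gamma^m,\gamma}\geq|\Gamma^{m+1}|_\gamma-|\Gamma^m|_\gamma$ for $d=2,3$. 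The only cosmetic difference is that you cite this key inequality from \cite{ani3d}, whereas the paper invokes Lemma~102 of \cite{bgnreview}, which records the same result for the discretization \eqref{eq:ipGG}.
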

\begin{proof} 
The volume preservation property follows as in the proof of
Theorem~\ref{thm:HDMS}, on choosing $\varphi=1$ in \eqref{eq:fdaniMS1}.
Similarly, for the discrete stability bound we choose
$\varphi= U^{m+1}$ in \eqref{eq:fdaniMS1}, 
$\chi = \pi_{\Gamma^m}[(\vec X^{m+1}-\vec\id)\cdot\vec\omega^{m+\frac12}]$ 
in \eqref{eq:fdaniMS2} and 
$\vec\eta = \vec X^{m+1}-\vec\id_{\mid_{\Gamma^m}}$ in \eqref{eq:fdaniMS3} 
in order to obtain
\begin{equation} \label{eq:anistep1}
\ttau_m\left(\nabla U^{m+1}, \nabla U^{m+1} \right) + 
\left\langle\nabs^{\tG_\ell}\vec X^{m+1},\nabs^{\tG_\ell}(\vec X^{m+1}-\vec\id) 
\right\rangle_{\Gamma^m,\gamma} =0.
\end{equation}
Now we recall from Lemma~102 in \cite{bgnreview} the result
\begin{equation} \label{eq:anistab2d3d}
\left\langle\nabs^{\tG_\ell}\vec X^{m+1},\nabs^{\tG_\ell}(\vec X^{m+1}-\vec\id) 
\right\rangle_{\Gamma^m,\gamma} \geq |\Gamma^{m+1}|_\gamma - |\Gamma^m|_\gamma
\end{equation}
for the cases $d=2$ and $d=3$. Combining \eqref{eq:anistep1} and
\eqref{eq:anistab2d3d} yields the desired result \eqref{eq:thmanistab}. 
\end{proof}

The adaptation of the iterative solution method \eqref{eq:itMS}, 
\eqref{eq:stop} to the anisotropic case is easy in the case $r=1$. 
For $r>1$ we combine the lagging of the nonlinear term $\vec\omega^{m+\frac12}$
in \eqref{eq:fdaniMS1} and \eqref{eq:fdaniMS3} 
with the lagging of $\vec\nu^{m+1}$ in the second term of \eqref{eq:fdaniMS3},
compare with \eqref{eq:ipGG}. Overall, we use the following iteration in order
to find a solution to \eqref{eq:fdaniMS}$^{(h)}$. 
For $i \geq 0$ find $(U^{m+1,i+1},\vec X^{m+1,i+1},\kappa^{m+1,i+1}_\gamma) 
\in S^m \times \Vhm \times \Whm$ 
such that
\begin{subequations} \label{eq:itaniMS}
\begin{align}\label{eq:itaniMS1}
&
\left(\nabla U^{m+1,i+1}, \nabla\varphi\right) 
- \left\langle \pi_{\Gamma^m}\left[
\frac{\vec X^{m+1,i+1}-\vec\id}{\ttau_m} \cdot\vec\omega^{m+\frac12,i}\right],
\varphi \right\rangle_{\Gamma^m}^{(h)} = 0 \qquad\forall\ \varphi \in S^m,\\
&\left\langle U^{m+1,i+1},\chi\right\rangle_{\Gamma^m}^{(h)} 
-\left\langle\kappa^{m+1,i+1}_\gamma,\chi\right\rangle^h_{\Gamma^m} 
= 0 \qquad\forall\ \chi \in \Whm ,\label{eq:itaniMS2}\\
&\left\langle \kappa^{m+1,i+1}_\gamma\vec\omega^{m+\frac12,i},
\vec\eta\right\rangle^h_{\Gamma^m} \nonumber \\ & \
+ \sum_{\ell=1}^L \int_{\Gamma^m} \left[
\frac{\gamma_\ell(\vec\nu^{m+1,i} \circ \vec X^{m+1,i})}
{\gamma(\vec\nu^{m+1,i}\circ \vec X^{m+1,i})} \right]^{r-1}
(\nabs^{\tG_{\ell}}\vec X^{m+1},\nabs^{\tG_{\ell}}\vec\eta)_{\tG_\ell}
\gamma_{\ell}(\vec\nu^m) \dH{d-1} 
= 0
\qquad\forall\ \vec\eta \in \Vhm \label{eq:itaniMS3}
\end{align}
\end{subequations}
and set $\Gamma^{m+1,i+1} = \vec X^{m+1,i+1}(\Gamma^m)$. The iteration is
stopped when the criterion \eqref{eq:stop} is satisfied. We note that the
second term in \eqref{eq:itaniMS3} is a linearization of \eqref{eq:ipGG}.
The term will be independent of $\vec X^{m+1,i}$ in the case $r=1$.

\setcounter{equation}{0}
\section{Numerical results} \label{sec:nr}

We implemented the fully discrete finite element approximations 
\eqref{eq:DMS}$^{(h)}$, \eqref{eq:fdMS}$^{(h)}$ and 
\eqref{eq:fdaniMS}$^{(h)}$ within the
finite element toolbox ALBERTA, see \cite{Alberta}. The systems of
linear equations arising from \eqref{eq:DMS}$^{(h)}$, \eqref{eq:itMS}$^{(h)}$ 
and \eqref{eq:itaniMS}$^{(h)}$, in the case $d=2$, 
are solved with the help of the sparse 
factorization package UMFPACK, see \cite{Davis04}. For the simulations in 3d,
on the other hand,
we employ the Schur complement solver described in \cite[(4.9)]{dendritic}.
For the stopping criterion in \eqref{eq:stop} we use the value
$\text{tol}=10^{-10}$.

For the triangulation $\mathcal{T}^m$ of the bulk domain $\Omega$, that is used
for the bulk finite element space $S^m$, we use an adaptive mesh that uses 
fine elements close to the interface $\Gamma^m$ and coarser elements away 
from it. The precise strategy is as described in \cite[\S5.1]{dendritic} 
and for a domain $\Omega=(-H,H)^d$ and two integer parameters
$N_c < N_f$ results in elements with maximal diameter approximately equal to
$h_f = \frac{2H}{N_f}$ close to $\Gamma^m$ and elements with maximal diameter
approximately equal to $h_c = \frac{2H}{N_c}$ far away from it. 
For all our computations we use $H=4$.
An example adaptive mesh is shown in Figure~\ref{fig:2dmesh}, below.

We stress that due to the unfitted nature of our finite element approximations,
special quadrature rules need to be employed in order to assemble terms that
feature both bulk and surface finite element functions. 
An example is the first term in \eqref{eq:fdMS2}.
For the schemes using numerical integration, e.g.\ \eqref{eq:fdMS}$^h$, this
task boils down to finding for each vertex of $\Gamma^m$ the bulk element
$\sigmaO^m \in\mathcal{T}^m$ it resides in, together with its barycentric
coordinates with respect to that bulk element.
For the remaining schemes that task is more involved.
Then the most challenging aspect of assembling the contributions for e.g.\
the first term in \eqref{eq:fdMS2}, for the scheme \eqref{eq:fdMS},
is to compute intersections
$\sigma^m \cap \sigmaO^m$ between an arbitrary surface element
$\sigma^m \subset \Gamma^m$ and an element $\sigmaO^m\in\mathcal{T}^m$
of the bulk mesh.
An algorithm that describes how these intersections can be calculated is given
in \cite[p.\ 6284]{dendritic}, see also Figure~4 in 
\rm \cite{dendritic} for a visualization of possible intersections of the form
$\sigma^m \cap \sigmaO^m$ in $\bR^3$.

Throughout this section we use (almost) uniform time steps, in that
$\ttau_m=\ttau$ for $m=0,\ldots, M-2$ and 
$\ttau_{M-1} = T - t_{m-1} \leq \ttau$. For many of the presented simulations we
will put particular emphasis on the volume preserving aspect, and so we recall
that given a polyhedral surface $\Gamma^m$, the enclosed volume can be computed
by
\begin{equation} \label{eq:fdvol}
\vol(\Omega^m_-) = 
\frac1d \int_{\Gamma^m} \vec\id \cdot \vec\nu^m \dH{d-1} ,
\end{equation}
where we have used the divergence theorem. We note that the integrand in
\eqref{eq:fdvol} is piecewise constant on $\Gamma^m$. For later use we also
define the relative volume loss at time $t=t_m$ as 
\[
v_\Delta^m = \frac{\vol(\Omega^0_-) - \vol(\Omega^m_-)}{\vol(\Omega^0_-)}.
\]

\subsection{Convergence experiment}

We begin with a convergence experiment for the scheme \eqref{eq:fdMS}
for the cases $d=2$ and $d=3$. To this
end, we recall from \cite[\S6.6]{dendritic} the following exact solution to
\eqref{eq:MS} consisting of two concentric spheres. 
Let $(\Gamma(t))_{t\in[0,T]}$ be a solution of \eqref{eq:MS}, where
$\Gamma(t) = \partial\Omega_-(t)$ with
$\Omega_-(t) = \{ \vec z \in \bR^3 : r_1(t) < |\vec z| < r_2(t) \}$.
Then the two radii $r_1 < r_2$ satisfy the following system of nonlinear ODEs:
In the case $d=2$ we have
\begin{subequations} \label{eq:ODE}
\begin{equation}
[r_1]_t = - \frac1{r_1}\frac{\frac1{r_1} + \frac1{r_2}}{\ln\frac{r_2}{r_1}} 
\quad\mbox{ and }\quad
[r_2]_t 
= \frac{r_1}{r_2}[r_1]_t \qquad \forall\ t \in[0,T_0), \label{eq:ODEa}
\end{equation}
while for $d=3$ it holds that
\begin{equation}
[r_1]_t = - \frac2{r_1^2}\frac{r_1 + r_2}{r_2-r_1} \quad\mbox{ and }\quad
[r_2]_t 
= \frac{r_1^2}{r_2^2}[r_1]_t \qquad \forall\ t \in[0,T_0), \label{eq:ODEb}
\end{equation}
\end{subequations}
where $T_0$ is the extinction time of the smaller sphere, 
i.e.\ $\lim_{t\to T_0} r_1(t) = 0$.
The corresponding solution $u$ satisfying \eqref{eq:MS} is given
by the radially symmetric function
\begin{equation}
u(\vec{z},t) = \begin{cases}
 -\frac{d-1}{r_2(t)} & |\vec{z}| \geq r_2(t), \\
\begin{cases}
\frac1{r_1(t)} - \ln\frac{|\vec{z}|}{r_1(t)}
\dfrac{\frac1{r_1(t)} + \frac1{r_2(t)}}{\ln\frac{r_2(t)}{r_1(t)}} & d = 2 \\
-\frac4{r_2(t) - r_1(t)} + 
\frac2{|\vec{z}|}\frac{{r_1(t)} + {r_2(t)}}{r_2(t)-r_1(t)} & d = 3
\end{cases}
& r_1(t) \leq |\vec{z}| \leq r_2(t), \\
 \frac{d-1}{r_1(t)} & |\vec{z}| \leq r_1(t).
\end{cases} 
\label{eq:ODE_u}
\end{equation}
The volume preserving property of the flow implies that
$v(t) = r_2^d(t) - r_1^d(t)$ is an invariant, so that
$r_2(t) = (v(0) + r_1^d(t))^\frac1d$. Hence $r_1$ satisfies
\begin{equation}
[r_1]_t = \begin{cases} 
- \dfrac1{r_1}\dfrac{\frac1{r_1} + (v(0) + r_1^2)^{-\frac12}}
{\ln\frac{(v(0) + r_1^2)^{\frac12}}{r_1}} & d = 2,  \\[5mm]
- \dfrac2{r_1^2}\dfrac{r_1 + (v(0) + r_1^3)^{\frac13}}
{(v(0) + r_1^3)^{\frac13}-r_1} & d = 3 ,
\end{cases}
\qquad \forall\ t \in[0,T_0).
\label{eq:ODE1d}
\end{equation}
In order to obtain a higher accuracy for the reference solution 
in our numerical convergence experiments, 
rather than integrating \eqref{eq:ODE1d} directly, 
we rather use a root-finding algorithm for the equation
\begin{equation*} 
0 = t + \begin{cases} \displaystyle
\int_{r_1(0)}^{r_1(t)} r\frac{\ln\frac{(v(0) + r^2)^{\frac12}}{r}}
{\frac1{r} + (v(0) + r^2)^{-\frac12}} \;{\rm d}r & d = 2 ,\\[5mm]
\displaystyle
\int_{r_1(0)}^{r_1(t)} \frac{r^2}2\frac{(v(0) + r^3)^{\frac13}-r}
{r + (v(0) + r^3)^{\frac13}} \;{\rm d}r & d = 3  ,
\end{cases}
\qquad \forall\ t \in[0,T_0)
\end{equation*}
in order to find $r_1(t)$.

For the initial radii $r_1(0) = 2.5$, $r_2(0) = 3$ and 
the time interval $[0,T]$ with $T=\frac12$, 
so that $r_1(T) \approx 1.66$ and $r_2(T) \approx 2.35$, 
we perform a convergence 
experiment for the true solution \eqref{eq:ODE}, at first for $d=2$.
To this end, for $i=0\to 4$, we set 
$N_f = \frac12 K = 2^{7+i}$, $N_c = 4^{i}$
and $\tau= 4^{3-i}\times10^{-3}$. 
We visualize the evolution with the help of the discrete solutions computed
with the scheme \eqref{eq:fdMS} for the run $i=1$ in Figure~\ref{fig:2dmesh},
where we also present a plot of the final bulk mesh $\mathcal{T}^M$ in order 
to show the effect of the adaptive mesh refinement strategy.
\begin{figure}
\center
\includegraphics[angle=-90,width=0.3\textwidth]{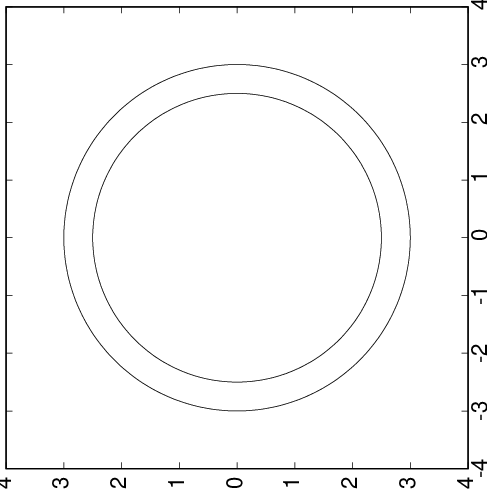}
\includegraphics[angle=-90,width=0.3\textwidth]{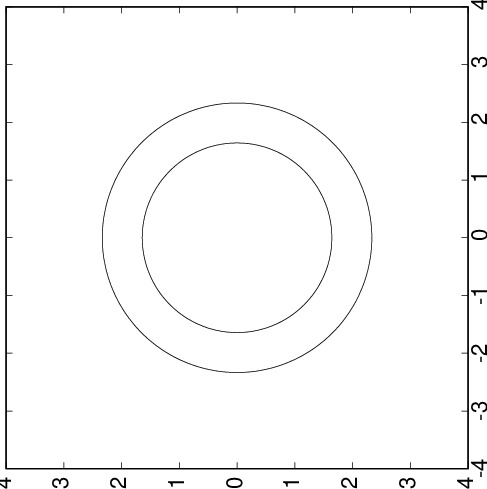}\quad
\includegraphics[angle=-90,width=0.3\textwidth]{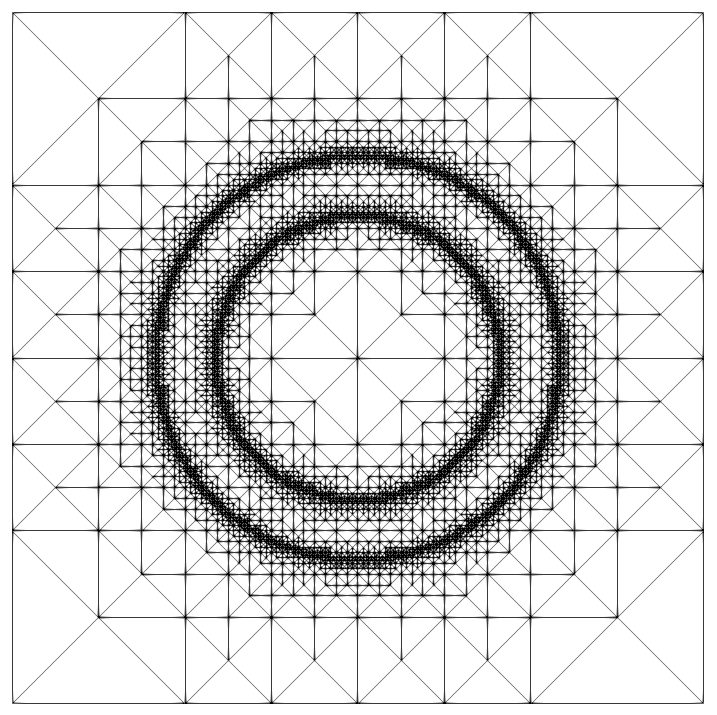}
\caption{The solution \eqref{eq:ODE} at times $t=0$ and $t=\frac12$, 
as well as the adaptive bulk mesh $\mathcal{T}^M$.}
\label{fig:2dmesh}
\end{figure}%
In Table~\ref{tab:fdMS2dT05} we display the errors 
\[\errorXx = \max_{m=1,\ldots, M} 
\max_{k=1,\ldots, K} \dist(\vec{q}^m_k, \Gamma(t_m))
\]
and
\[
\errorUu = \max_{m=1,\ldots, M}\|U^m - I^mu(\cdot,t_m)\|_{L^\infty(\Omega)},
\]
where $I^m : C^0(\overline\Omega) \to S^m$ denotes the standard 
interpolation operator. We also let $K_\Omega^m$ denote the number of
degrees of freedom of $S^m$, and define $h^m_\Gamma = \max_{j = 1,\ldots,J} 
\diam(\sigma^m_j)$.
As a comparison, we show the same error computations for the linear scheme
\eqref{eq:DMS} in Table~\ref{tab:DMS2dT05}. 
As expected, we observe true volume preservation for the scheme \eqref{eq:fdMS} 
in Table~\ref{tab:fdMS2dT05}, up to solver tolerance, while the relative volume
loss in Table~\ref{tab:DMS2dT05} decreases as $\ttau$ becomes smaller. 
Surprisingly, the two error quantities $\errorXx$ and $\errorUu$ are generally
lower in Table~\ref{tab:DMS2dT05} compared to Table~\ref{tab:fdMS2dT05},
although the difference becomes smaller with smaller discretization parameters.
For completeness, we also present the errors for the same convergence
experiment for the two schemes \eqref{eq:fdMS}$^h$ and \eqref{eq:DMS}$^h$ with
numerical integration, see Tables~\ref{tab:numintfdMS2dT05} and 
\ref{tab:numintDMS2dT05}.
\begin{table}
\center
\begin{tabular}{c|c|c|c|c|c|c}
 $h_{f}$ & $h^M_\Gamma$ & $\errorUu$ & $\errorXx$ & $K^M_\Omega$ & $K$
 & $|v_\Delta^M|$ \\ \hline
 6.2500e-02 & 1.1400e-01 & 1.5609e-01 & 3.4036e-02 & 2925 & 256 & $<10^{-10}$\\
 3.1250e-02 & 5.7282e-02 & 4.5306e-02 & 1.7416e-02 & 5101 & 512 & $<10^{-10}$\\
 1.5625e-02 & 2.8714e-02 & 1.4406e-02 & 8.9079e-03 & 9785 & 1024& $<10^{-10}$\\
 7.8125e-03 & 1.4375e-02 & 5.0773e-03 & 4.6020e-03 & 21557 & 2048& $<10^{-10}$\\
 3.9062e-03 & 7.1929e-03 & 2.8734e-03 & 2.1860e-03 & 96781 & 4096& $<10^{-10}$\\
\end{tabular}
\caption{Convergence test for \eqref{eq:ODE} over the time interval 
$[0,\frac12]$ for the scheme \eqref{eq:fdMS}.}
\label{tab:fdMS2dT05}
\end{table}%
\begin{table}
\center
\begin{tabular}{c|c|c|c|c|c|c}
 $h_{f}$ & $h^M_\Gamma$ & $\errorUu$ & $\errorXx$ & $K^M_\Omega$ & $K$
 & $|v_\Delta^M|$ \\ \hline
 6.2500e-02 & 1.1497e-01 & 1.4990e-01 & 5.1377e-03 & 2869 & 256 & 1.2e-02\\
 3.1250e-02 & 5.7408e-02 & 4.3367e-02 & 7.7591e-03 & 5097 & 512 & 3.2e-03\\
 1.5625e-02 & 2.8730e-02 & 1.3917e-02 & 6.4656e-03 & 9857 & 1024 & 8.3e-04\\
 7.8125e-03 & 1.4377e-02 & 4.9546e-03 & 3.9948e-03 & 21593 & 2048 & 2.1e-04\\
 3.9062e-03 & 7.1932e-03 & 2.7345e-03 & 2.0351e-03 & 96969 & 4096 & 5.1e-05\\
\end{tabular}
\caption{Convergence test for \eqref{eq:ODE} over the time interval 
$[0,\frac12]$ for the scheme \eqref{eq:DMS}.}
\label{tab:DMS2dT05}
\end{table}%
\begin{table}
\center
\begin{tabular}{c|c|c|c|c|c|c}
 $h_{f}$ & $h^M_\Gamma$ & $\errorUu$ & $\errorXx$ & $K^M_\Omega$ & $K$
 & $|v_\Delta^M|$ \\ \hline
 6.2500e-02 & 1.1433e-01 & 1.6079e-01 & 2.4789e-02 & 2941 & 256 & $<10^{-10}$\\
 3.1250e-02 & 5.7357e-02 & 4.9133e-02 & 1.3107e-02 & 5077 & 512 & $<10^{-10}$\\
 1.5625e-02 & 2.8733e-02 & 1.6422e-02 & 6.8358e-03 & 9865 & 1024& $<10^{-10}$\\
 7.8125e-03 & 1.4380e-02 & 6.1040e-03 & 3.5755e-03 & 21605 & 2048& $<10^{-10}$\\
 3.9062e-03 & 7.1941e-03 & 2.6860e-03 & 1.6743e-03 & 96893 & 4096& $<10^{-10}$\\
\end{tabular}
\caption{Convergence test for \eqref{eq:ODE} over the time interval 
$[0,\frac12]$ for the scheme \eqref{eq:fdMS}$^h$.}
\label{tab:numintfdMS2dT05}
\end{table}%
\begin{table}
\center
\begin{tabular}{c|c|c|c|c|c|c}
 $h_{f}$ & $h^M_\Gamma$ & $\errorUu$ & $\errorXx$ & $K^M_\Omega$ & $K$
 & $|v_\Delta^M|$ \\ \hline
 6.2500e-02 & 1.1530e-01 & 1.6291e-01 & 1.3785e-02 & 2881 & 256 & 1.2e-02\\
 3.1250e-02 & 5.7482e-02 & 4.7307e-02 & 4.5358e-03 & 5185 & 512 & 3.2e-03\\
 1.5625e-02 & 2.8749e-02 & 1.5926e-02 & 4.4224e-03 & 9757 & 1024 & 8.2e-04\\
 7.8125e-03 & 1.4382e-02 & 5.9809e-03 & 2.9693e-03 & 21501 & 2048 & 2.1e-04\\
 3.9062e-03 & 7.1943e-03 & 2.5431e-03 & 1.5237e-03 & 96997 & 4096 & 5.1e-05\\
\end{tabular}
\caption{Convergence test for \eqref{eq:ODE} over the time interval 
$[0,\frac12]$ for the scheme \eqref{eq:DMS}$^h$.}
\label{tab:numintDMS2dT05}
\end{table}%

We also perform a convergence experiment for the true solution
\eqref{eq:ODE} for $d=3$. To this end, we choose
the initial radii $r_1(0) = 2.5$, $r_2(0) = 3$ and 
the time interval $[0,T]$ with $T=0.1$, 
so that $r_1(T) \approx 2.15$ and $r_2(T) \approx 2.77$.
Moreover, for $i=0\to 3$, we set $N_f = 2^{5+i}$, $N_c = 4^{i}$,
$\frac12 K=\hat K(i)$, where $(\hat K(0), \hat K(1), \hat K(2), 
\hat K(3)) = 
(770, 3074, 12290, 49154)$, and $\tau= 4^{3-i}\times10^{-3}$. 
The errors $\errorUu$ and $\errorXx$ for the four schemes  
\eqref{eq:fdMS}, \eqref{eq:fdMS}$^{(h)}$
\eqref{eq:DMS} and \eqref{eq:DMS}$^{(h)}$ 
on the interval $[0,T]$ with $T=0.1$ are displayed in 
Tables~\ref{tab:fdMS3d}, \ref{tab:numintfdMS3d}, \ref{tab:DMS3d} and
\ref{tab:numintDMS3d}.
\begin{table}
\center
\begin{tabular}{c|c|c|c|c|c|c}
 $h_{f}$ & $h^M_\Gamma$ & $\errorUu$ & $\errorXx$ & $K^M_\Omega$ & $K$ 
 & $|v_\Delta^M|$ \\ \hline
 2.5000e-01 & 5.6320e-01 & 7.3514e-01 & 1.3667e-01 & 10831 & 1540& $<10^{-10}$\\
 1.2500e-01 & 2.8759e-01 & 2.5135e-01 & 4.6999e-02 & 46311 & 6148& $<10^{-10}$\\
 6.2500e-02 & 1.4473e-01 & 9.1052e-02 & 1.9356e-02 & 188389 &24580&$<10^{-10}$\\
 3.1250e-02 & 7.2527e-02 & 3.5851e-02 & 8.7870e-03 & 956293 &98308&$<10^{-10}$\\
\end{tabular}
\caption{Convergence test for \eqref{eq:ODE} over the time interval $[0,0.1]$
for the scheme \eqref{eq:fdMS}.}
\label{tab:fdMS3d}
\end{table}%
\begin{table}
\center
\begin{tabular}{c|c|c|c|c|c|c}
 $h_{f}$ & $h^M_\Gamma$ & $\errorUu$ & $\errorXx$ & $K^M_\Omega$ & $K$ 
 & $|v_\Delta^M|$ \\ \hline
 2.5000e-01 & 5.6594e-01 & 8.9355e-01 & 1.3062e-01 & 10879 & 1540& $<10^{-10}$\\
 1.2500e-01 & 2.8815e-01 & 3.1381e-01 & 4.3354e-02 & 46335 & 6148& $<10^{-10}$\\
 6.2500e-02 & 1.4484e-01 & 1.2228e-01 & 1.7321e-02 & 188725 &24580&$<10^{-10}$\\
 3.1250e-02 & 7.2548e-02 & 5.7925e-02 & 7.6589e-03 & 970477 & 98308&$<10^{-10}$
\end{tabular}
\caption{Convergence test for \eqref{eq:ODE} over the time interval $[0,0.1]$
for the scheme \eqref{eq:fdMS}$^h$.}
\label{tab:numintfdMS3d}
\end{table}%
\begin{table}
\center
\begin{tabular}{c|c|c|c|c|c|c}
 $h_{f}$ & $h^M_\Gamma$ & $\errorUu$ & $\errorXx$ & $K^M_\Omega$ & $K$ 
 & $|v_\Delta^M|$ \\ \hline
 2.5000e-01 & 5.7042e-01 & 6.5892e-01 & 6.2158e-02 & 10879 & 1540 & 2.3e-02\\
 1.2500e-01 & 2.8847e-01 & 2.3273e-01 & 3.0705e-02 & 46375 & 6148 & 6.2e-03\\
 6.2500e-02 & 1.4485e-01 & 8.6575e-02 & 1.5551e-02 & 188725 & 24580 & 1.5e-03\\
 3.1250e-02 & 7.2548e-02 & 3.4759e-02 & 7.8760e-03 & 956293 & 98308 & 3.6e-04\\
\end{tabular}
\caption{Convergence test for \eqref{eq:ODE} over the time interval $[0,0.1]$
for the scheme \eqref{eq:DMS}.}
\label{tab:DMS3d}
\end{table}%
\begin{table}
\center
\begin{tabular}{c|c|c|c|c|c|c}
 $h_{f}$ & $h^M_\Gamma$ & $\errorUu$ & $\errorXx$ & $K^M_\Omega$ & $K$ 
 & $|v_\Delta^M|$ \\ \hline
 2.5000e-01 & 5.7401e-01 & 7.8420e-01 & 6.5619e-02 & 10879 & 1540 & 2.2e-02\\
 1.2500e-01 & 2.8908e-01 & 2.9887e-01 & 2.7440e-02 & 46423 & 6148 & 6.0e-03\\
 6.2500e-02 & 1.4497e-01 & 1.1943e-01 & 1.3544e-02 & 188965 & 24580 & 1.5e-03\\
 3.1250e-02 & 7.2572e-02 & 5.7009e-02 & 6.8226e-03 & 956821 & 98308 & 3.6e-04\\
\end{tabular}
\caption{Convergence test for \eqref{eq:ODE} over the time interval $[0,0.1]$
for the scheme \eqref{eq:DMS}$^h$.}
\label{tab:numintDMS3d}
\end{table}%
Similarly to the convergence experiments in 2d, we note that for the schemes
\eqref{eq:DMS}$^{(h)}$ the relative volume loss converges to zero as the
discretization parameters get smaller, while the
schemes \eqref{eq:fdMS}$^{(h)}$ preserve the volume exactly in every case.
The error quantities $\errorUu$ and $\errorXx$ behave very similarly for all
four schemes.

\subsection{Simulations in 2d}

In this subsection we consider some numerical experiments for the case $d=2$. 
In the first computation, we numerically confirm the well-known result shown in
\cite{Mayer98}, which says that the Mullins--Sekerka flow \eqref{eq:MS} 
does not preserve convexity. To this end, we choose for $\Gamma(0)$ an
elongated cigar shape of total dimension $7\times1$. The discretization
parameters for the computation are $N_f=128$, $N_c=16$, $\ttau=10^{-3}$, 
$T=2$ and $K=256$, and the results are shown in
Figure~\ref{fig:nonconvex2d}.
We observe that during the evolution the interface becomes nonconvex, before
reaching a circular steady state. As expected, the enclosed volume is preserved
during the evolution. This is not the case when using the scheme 
\eqref{eq:DMS}, as can be seen from Figure~\ref{fig:oldnonconvex2d}, where for
completeness we show the same simulation for this alternative finite element
approximation.
\begin{figure}
\center
\includegraphics[angle=-90,width=0.3\textwidth]{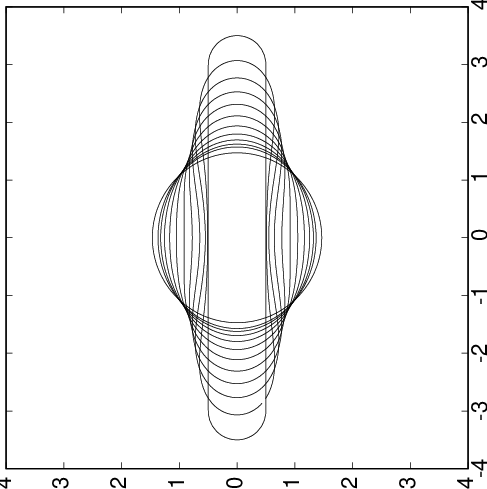}
\includegraphics[angle=-90,width=0.3\textwidth]{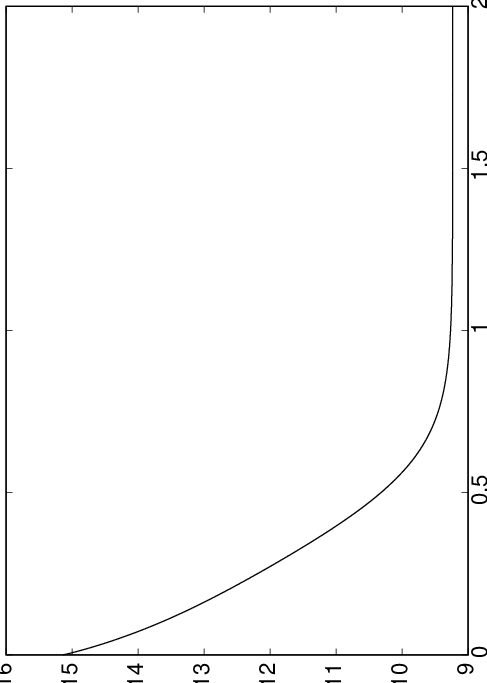}\quad
\includegraphics[angle=-90,width=0.3\textwidth]{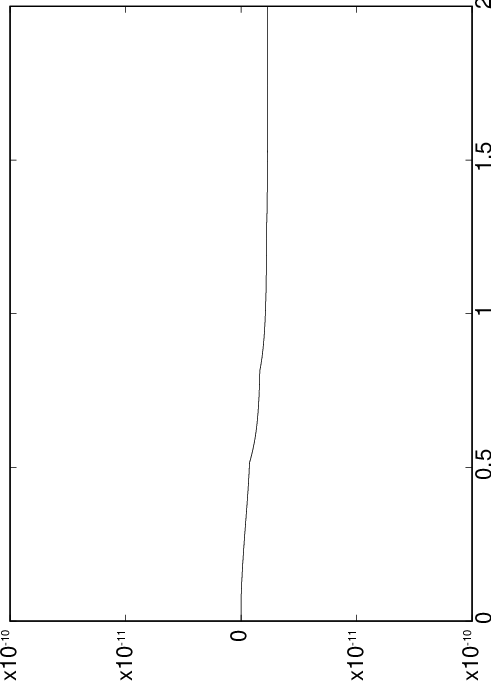}
\caption{$\Gamma^m$ at times $t=0,0.1,\ldots,1,T=2$
for the scheme \eqref{eq:fdMS}. We also show a plot of
the discrete energy $|\Gamma^m|$ and of the relative volume loss 
$v_\Delta^m$ over time.}
\label{fig:nonconvex2d}
\end{figure}%
\begin{figure}
\center
\includegraphics[angle=-90,width=0.3\textwidth]{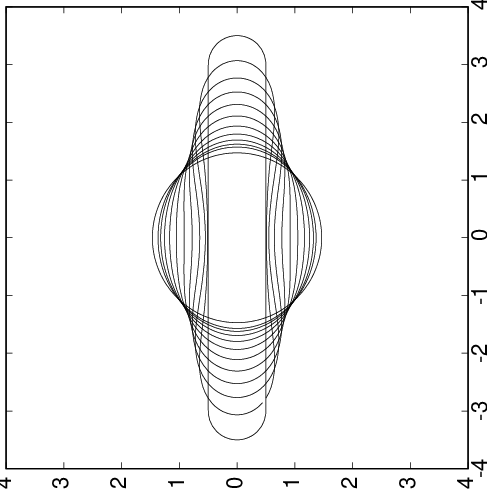}
\includegraphics[angle=-90,width=0.3\textwidth]{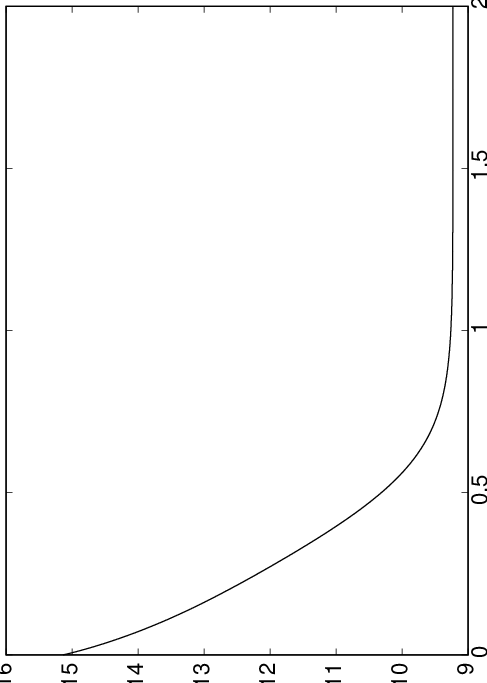}\quad
\includegraphics[angle=-90,width=0.3\textwidth]{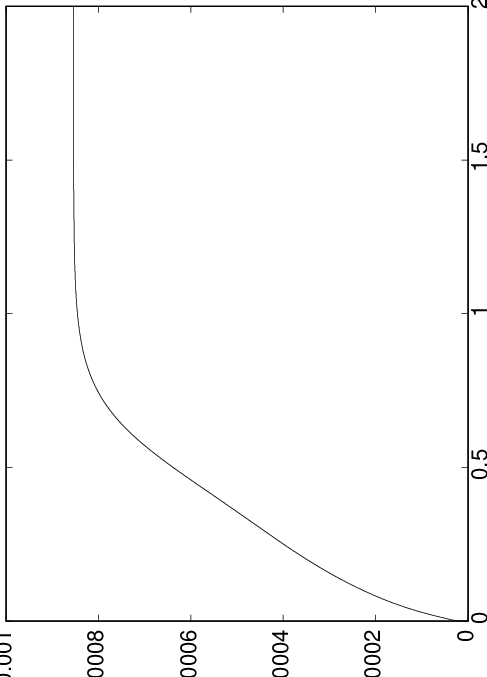}
\caption{$\Gamma^m$ at times $t=0,0.1,\ldots,1,T=2$
for the scheme \eqref{eq:DMS}. We also show a plot of
the discrete energy $|\Gamma^m|$ and of the relative volume loss 
$v_\Delta^m$ over time.}
\label{fig:oldnonconvex2d}
\end{figure}%

Our second simulation is for an anisotropic surface energy. Here we make use of
the fact that anisotropies of the form \eqref{eq:g} can be used to approximate
crystalline surface energies, where the isoperimetric minimizers 
(the so-called Wulff shapes) exhibit flat
parts and sharp corners. In particular, we choose the density
\begin{equation} \label{eq:gammabgnL}
\gamma_0(p) 
= \tfrac14 \sum_{\ell=1}^4 \sqrt{ [(R(\tfrac\pi{4})^\ell]^T D(\delta) 
(R(\tfrac\pi{4}))^\ell p \cdot p}, \quad \delta = 10^{-4},
\end{equation}
where 
$R(\theta)=\binom{\phantom{-}\cos\theta\ \sin\theta}{-\sin\theta\ \cos\theta}$
and $D(\delta) = \diag(1,\delta^2)$.
Then, inspired by the initial curve from \cite[Fig.~0]{AlmgrenT95},
see also \cite[Fig.~7]{finsler}, 
we perform a computation for our scheme \eqref{eq:fdaniMS}. We observe that
all the facets of the initial data are aligned with the Wulff shape of
\eqref{eq:gammabgnL} with $\delta=0$, i.e.\ \ regular octagon.
For the computations shown in Figure~\ref{fig:AlmgrenT95} we employed the 
discretization parameters $N_f = 256$, $N_c = 32$, $K=512$ and
$\Delta t = 10^{-3}$. 
We note that during the evolution all the facets remain aligned with the facets
of the Wulff shape. Some facets grow at the expense of others, leading to some
facets vanishing completely. Eventually a scaled Wulff shape is approached as a
steady state of the flow.
As a comparison, we also show the evolution for the isotropic case for the same
initial data, in Figure~\ref{fig:isoAlmgrenT95}. Here the nonconvex initial
data soon evolves to a convex curve, which then converges towards a circle.
\begin{figure}
\center
\includegraphics[angle=-90,width=0.3\textwidth]{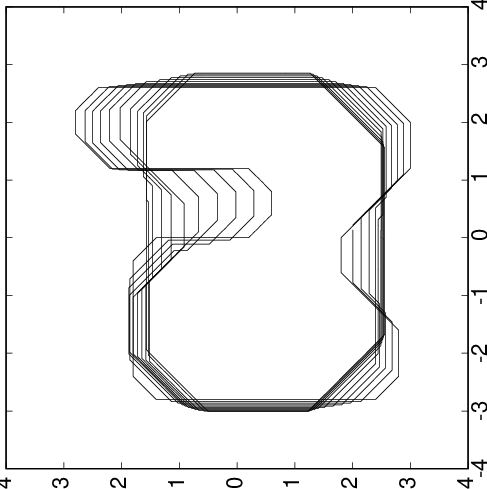}
\includegraphics[angle=-90,width=0.3\textwidth]{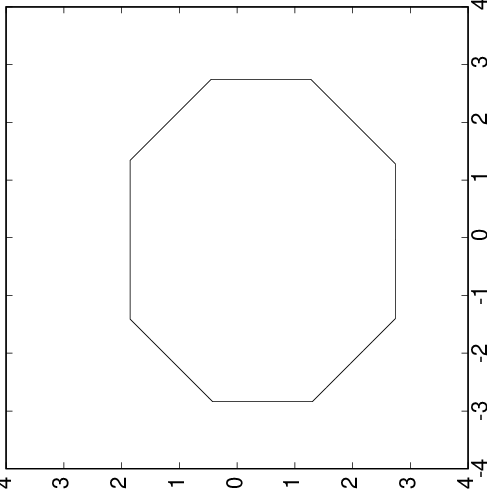}
\includegraphics[angle=-90,width=0.3\textwidth]{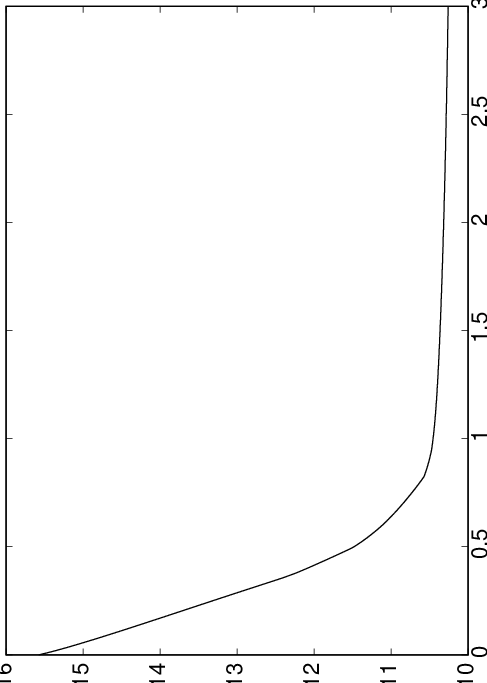}
\caption{$\Gamma^m$ at times $t=0,0.1,\ldots,1$, and at time $t=T=3$,
for the scheme \eqref{eq:fdaniMS}. 
We also show a plot of the discrete energy $|\Gamma^m|_\gamma$ 
over time.}
\label{fig:AlmgrenT95}
\end{figure}%
\begin{figure}
\center
\includegraphics[angle=-90,width=0.3\textwidth]{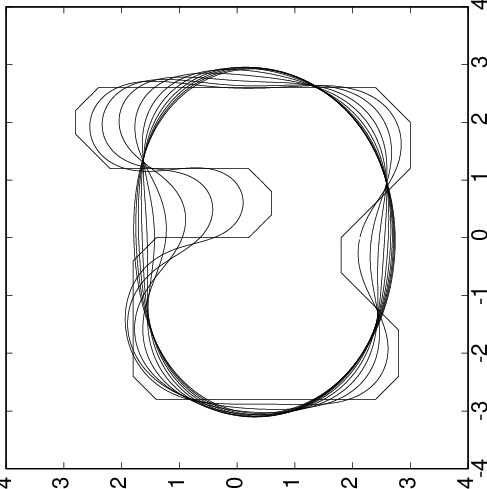}
\includegraphics[angle=-90,width=0.3\textwidth]{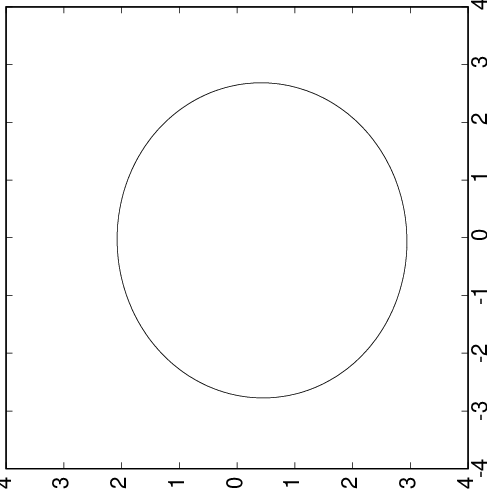}
\includegraphics[angle=-90,width=0.3\textwidth]{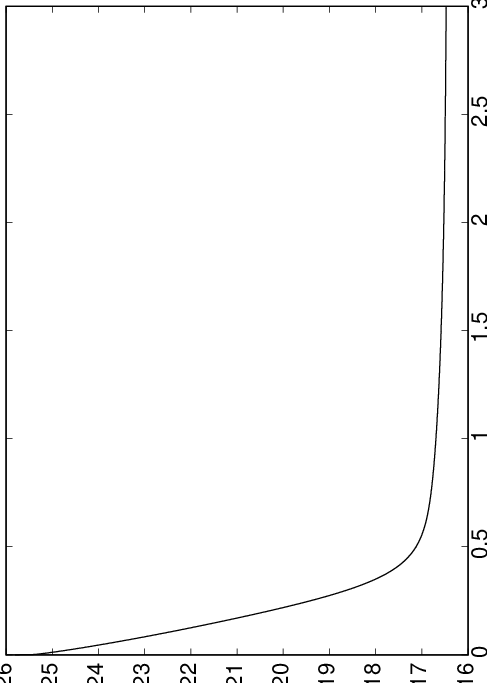}
\caption{$\Gamma^m$ at times $t=0,0.1,\ldots,1$, and at time $t=T=3$,
for the scheme \eqref{eq:fdMS}. 
We also show a plot of the discrete energy $|\Gamma^m|$ 
over time.}
\label{fig:isoAlmgrenT95}
\end{figure}%

\subsection{Simulations in 3d}

We end this section with some numerical simulations for the case $d=3$. All the
initial data will always be chosen symmetric with respect to the origin.
First we look at the 3d analogue of the experiment in
Figure~\ref{fig:nonconvex2d}, that is we start with an initial interface in the
shape of a rounded cylinder with total dimensions $7\times1\times1$.
The discretization parameters for this computation are $N_f=128$, $N_c=16$,
$\tau=10^{-3}$, $T=2$ and $K=1154$. 
\begin{figure}
\center
\includegraphics[angle=-0,width=0.18\textwidth]{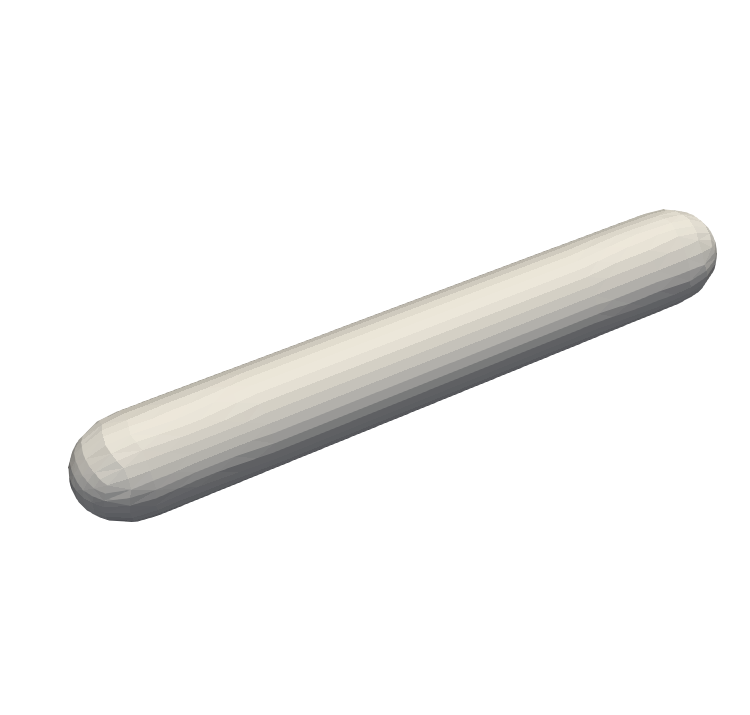}
\includegraphics[angle=-0,width=0.18\textwidth]{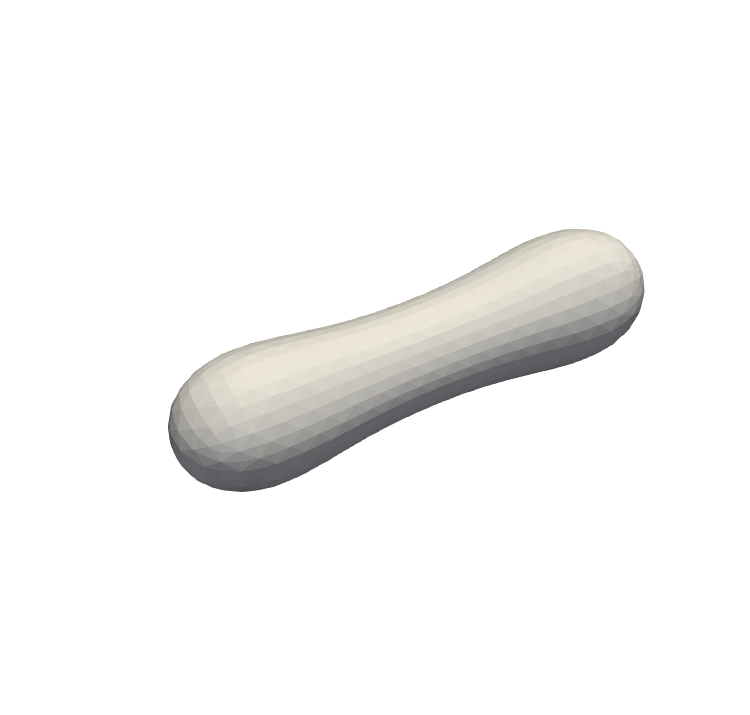}
\includegraphics[angle=-0,width=0.18\textwidth]{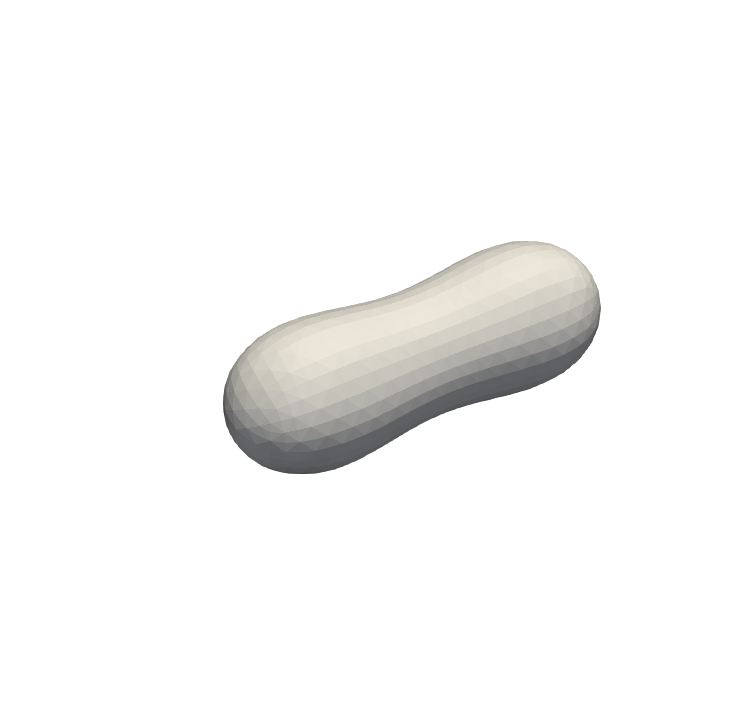}
\includegraphics[angle=-0,width=0.18\textwidth]{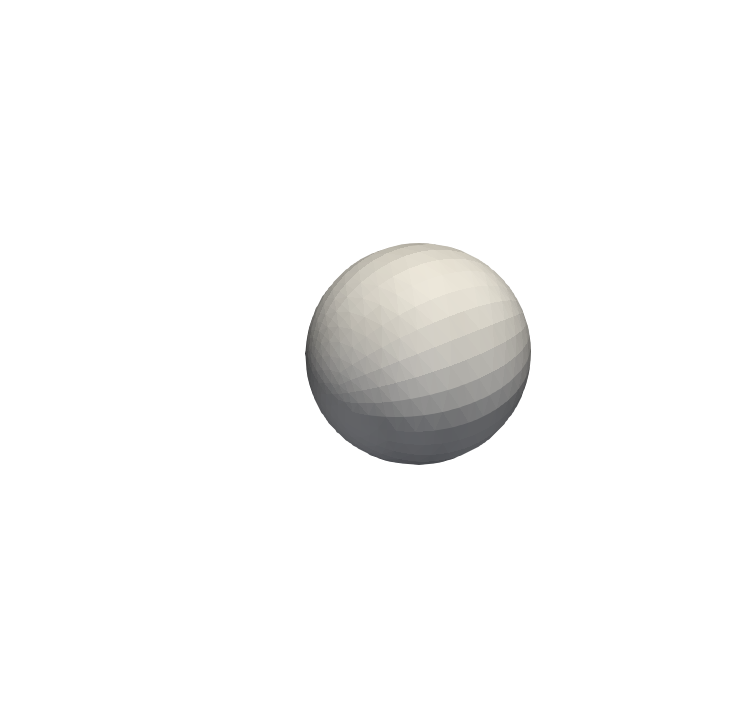}
\includegraphics[angle=-0,width=0.18\textwidth]{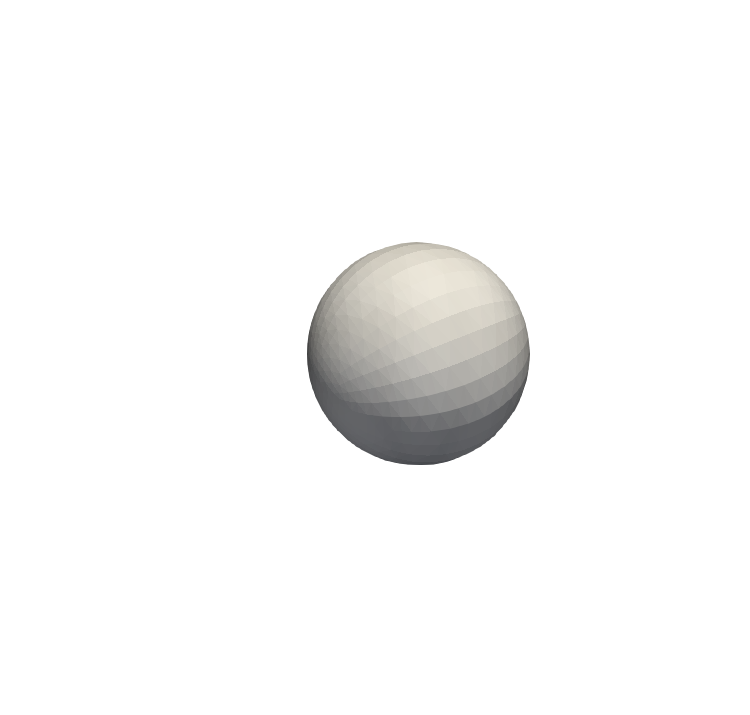}
\\
\includegraphics[angle=-90,width=0.3\textwidth]{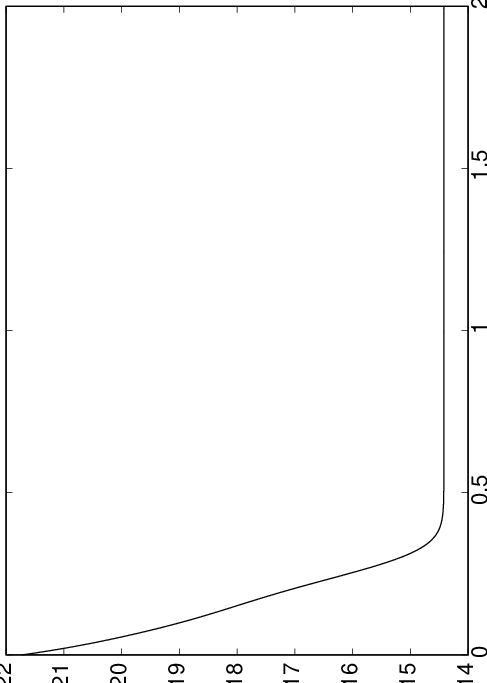}\quad
\includegraphics[angle=-90,width=0.3\textwidth]{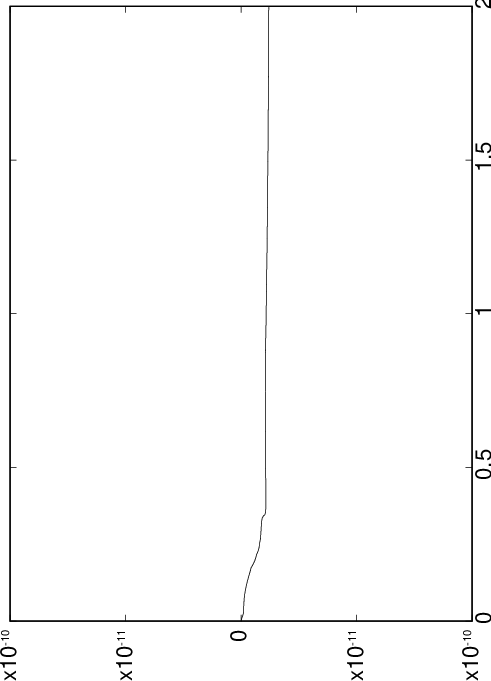}
\caption{$\Gamma^m$ at times $t=0,0.1,0.2,0.5,2$. Below we show a plot of
the discrete energy $|\Gamma^m|$ and of the relative volume loss 
$v_\Delta^m$ over time.}
\label{fig:nonconvex3dfine}
\end{figure}%
We observe that the initially convex interface loses its convexity
during the evolution, which numerically confirms that such evolutions also
exist in the case $d=3$. Recall that the corresponding result for $d=2$
has been shown in \cite{Mayer98}.
For the numerical simulation in Figure~\ref{fig:nonconvex3dfine} 
we also note that
the discrete energy is monotonically decreasing, while the enclosed volume is
maintained up to the chosen solver tolerance.

In a second experiment where an initially convex interface loses its convexity,
we start the evolution with a rounded cylinder of total dimension
$6\times6\times1$. We see from the evolution in 
Figure~\ref{fig:cigar661_K1538} that the moving interface becomes nonconvex,
before it approaches the shape of a sphere.
The discretization parameters for this computation are 
$N_f=128$, $N_c=16$, $\tau=10^{-3}$, $T=2$ and $K=1538$.
\begin{figure}
\center
\includegraphics[angle=-0,width=0.18\textwidth]{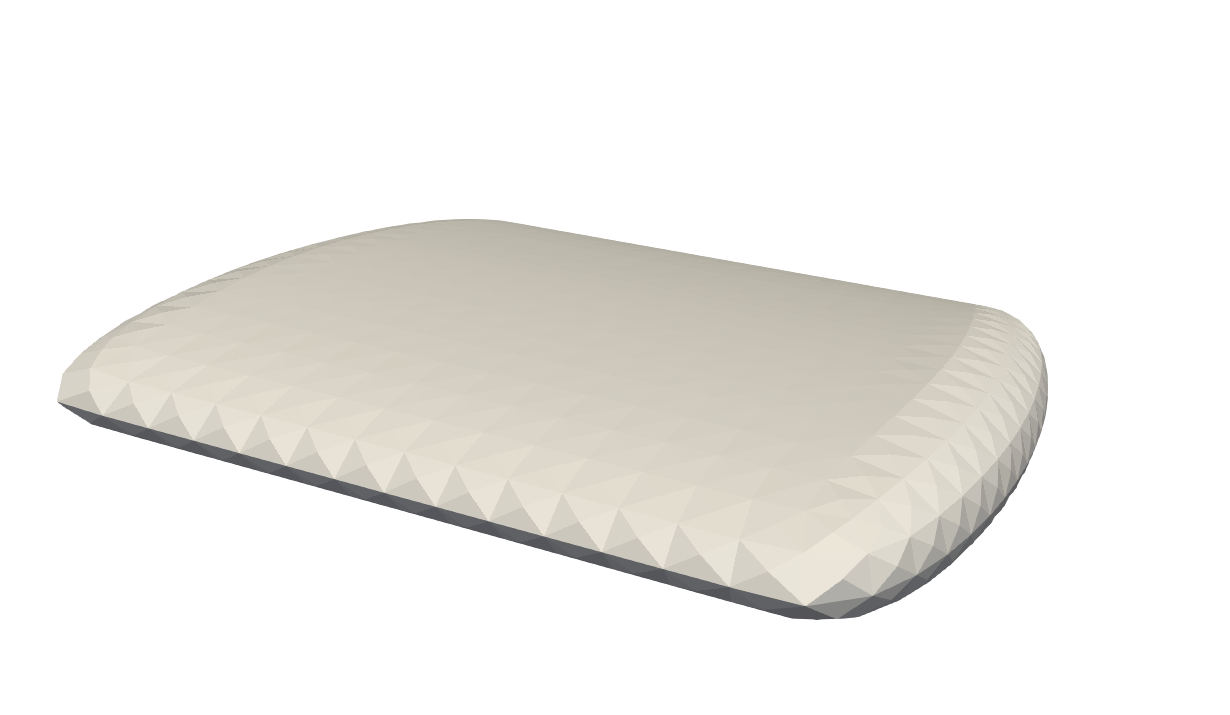}
\includegraphics[angle=-0,width=0.18\textwidth]{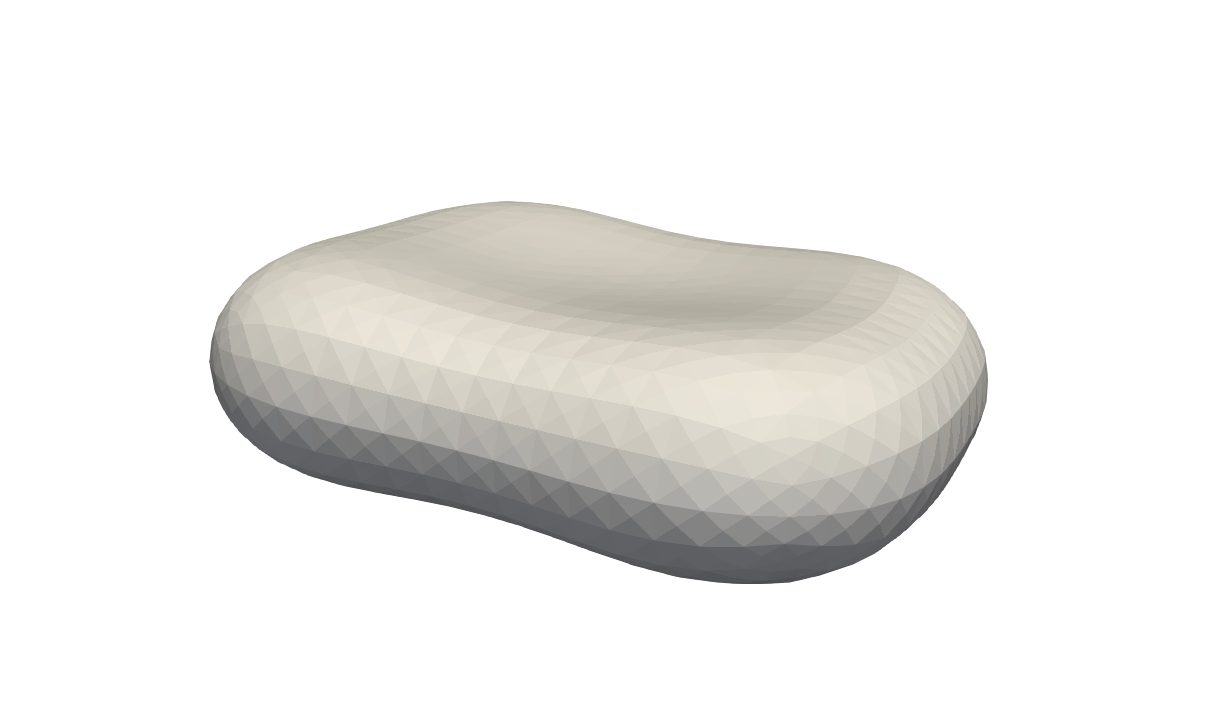}
\includegraphics[angle=-0,width=0.18\textwidth]{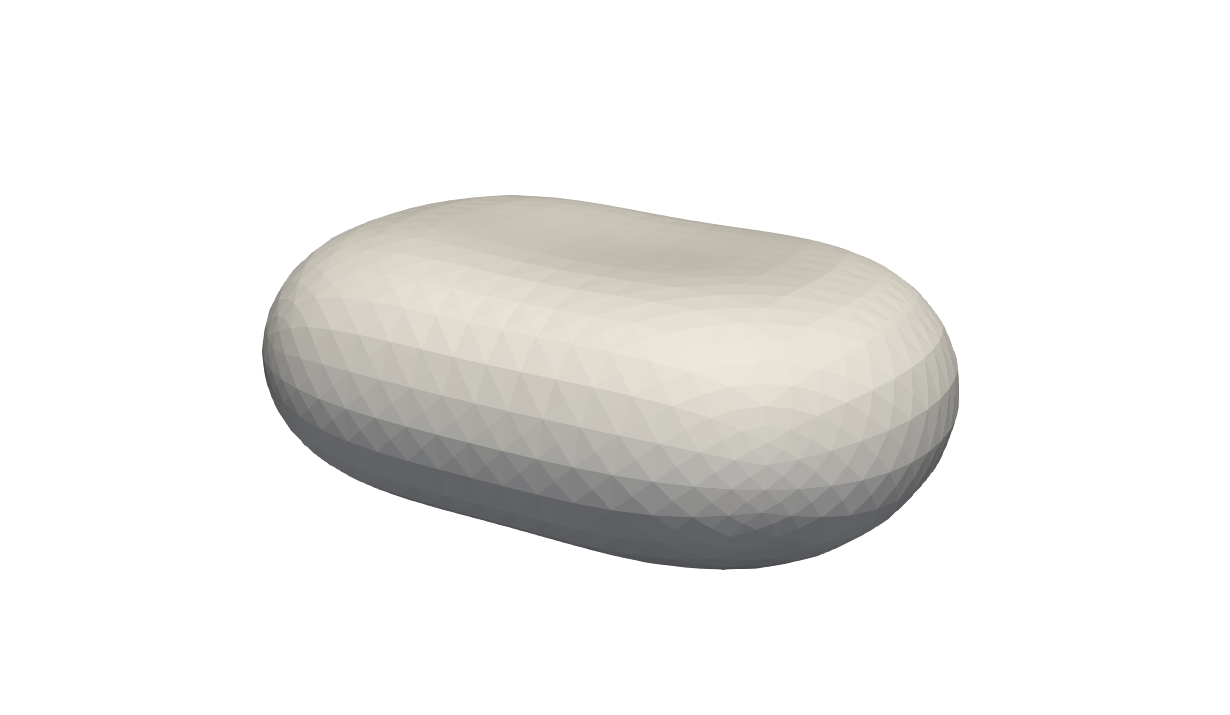}
\includegraphics[angle=-0,width=0.18\textwidth]{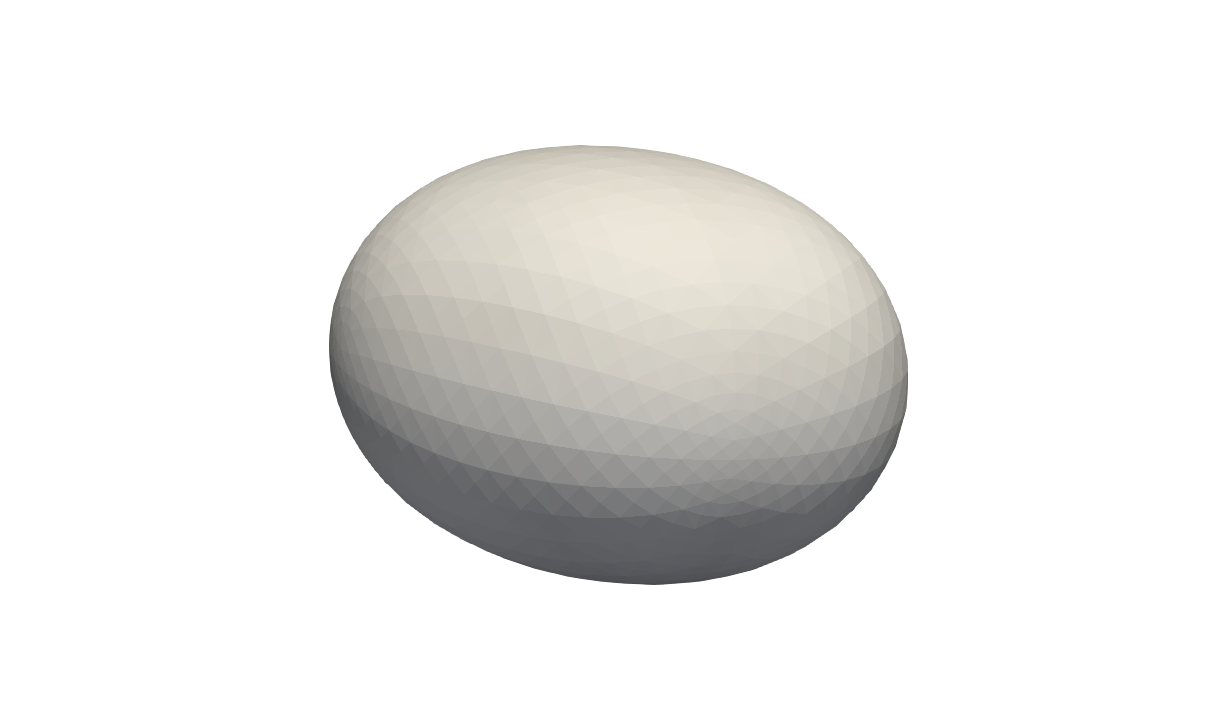}
\includegraphics[angle=-0,width=0.18\textwidth]{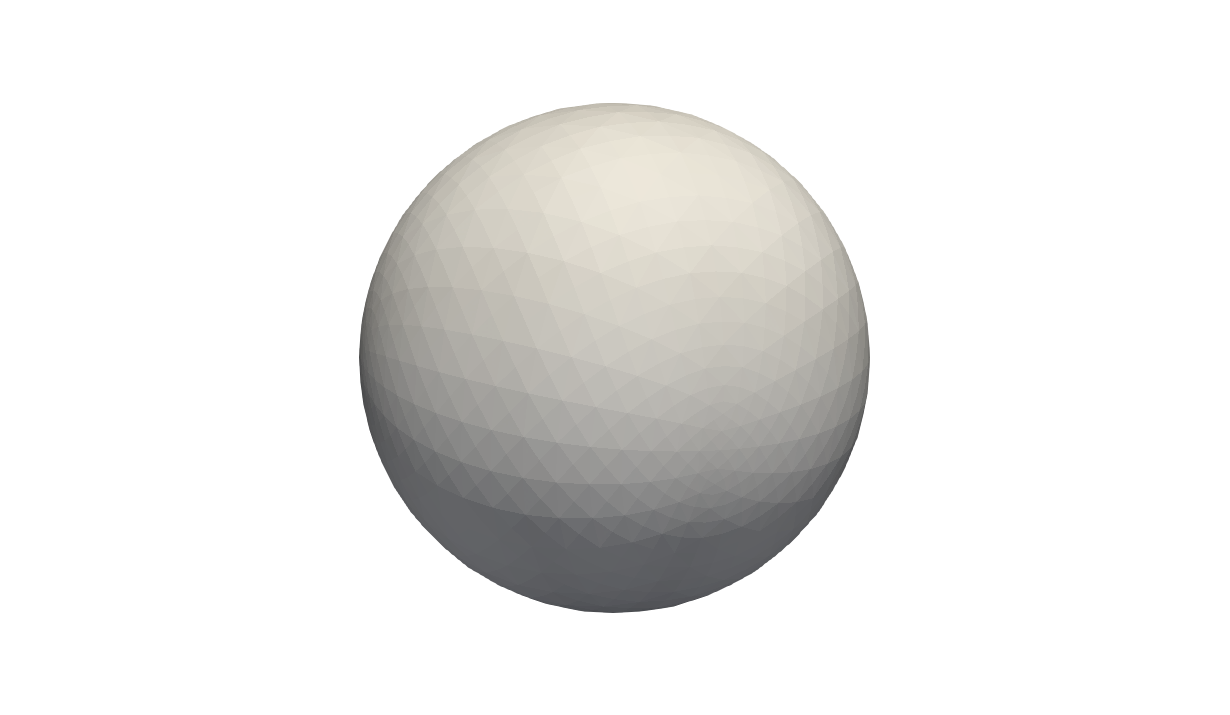}
\\
\includegraphics[angle=-90,width=0.3\textwidth]{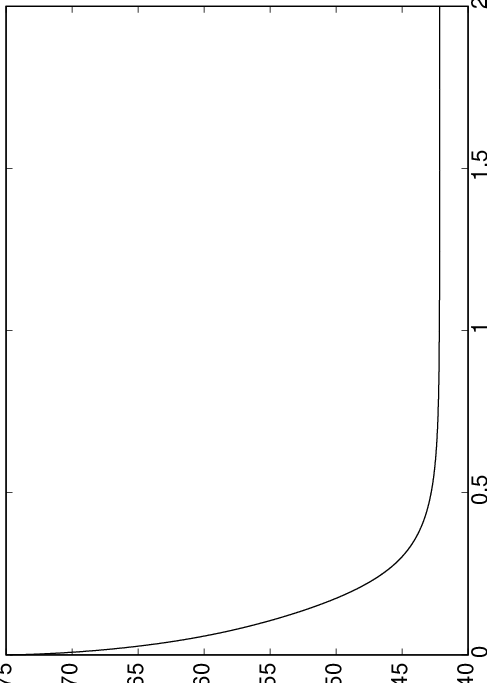}\quad
\includegraphics[angle=-90,width=0.3\textwidth]{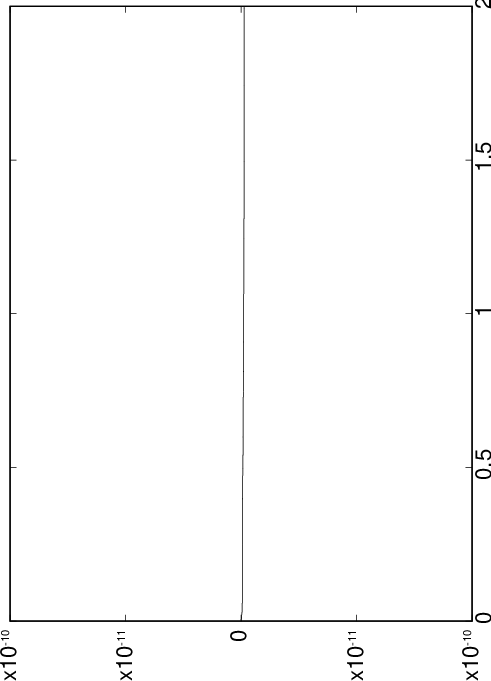}
\caption{$\Gamma^m$ at times $t=0,0.1,0.2,0.5,2$. Below we show a plot of
the discrete energy $|\Gamma^m|$ and of the relative volume loss 
$v_\Delta^m$ over time.}
\label{fig:cigar661_K1538}
\end{figure}%

We also present two simulations for an anisotropic surface energy. 
In the first one, we repeat the simulation in Figure~\ref{fig:nonconvex3dfine},
with the same discretization parameters as before, but now
for the anisotropy
\begin{equation*} 
\gamma(\vec{p}) = \sum_{i=1}^3 \left[ \delta^2|\vec{p}|^2 +
p_i^2(1-\delta^2)\right]^\frac12,\quad \delta = 0.1,
\end{equation*}
which approximates the $\ell^1$--norm of $\vec p$.
For the computation in Figure~\ref{fig:aniL3nonconvex}
it can be observed that, as in the isotropic
case, the interface loses its convexity. Eventually it settles down to an
approximation of the Wulff shape, which here is a smoothed cube.
\begin{figure}
\center
\includegraphics[angle=-0,width=0.18\textwidth]{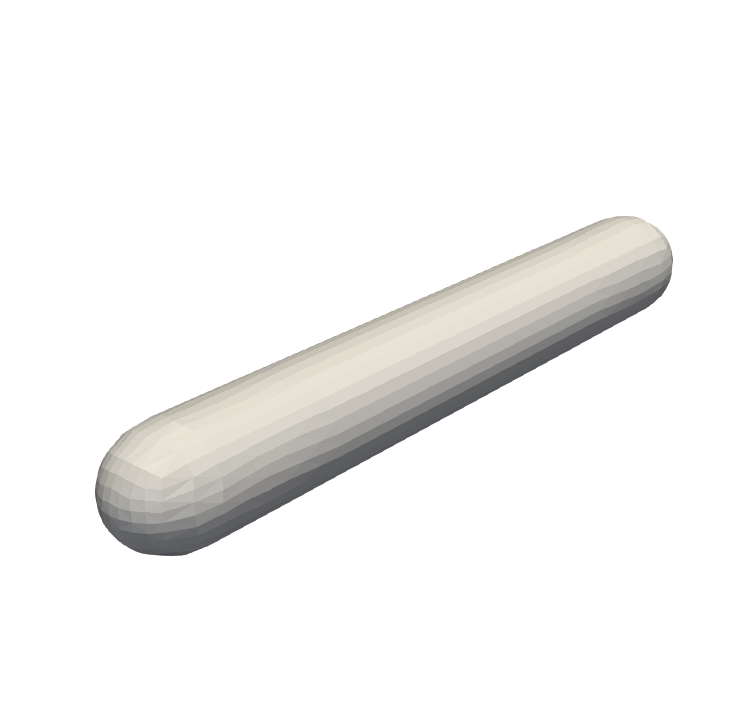}
\includegraphics[angle=-0,width=0.18\textwidth]{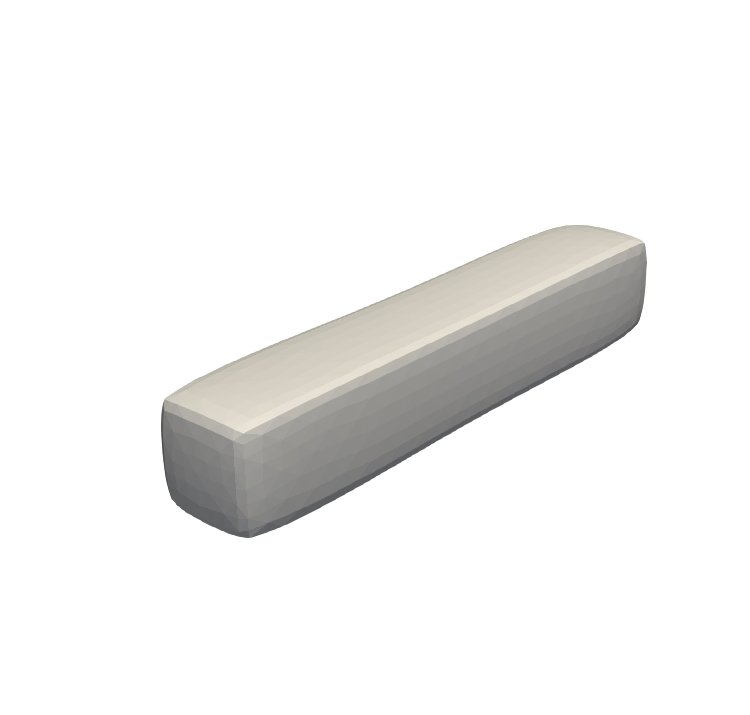}
\includegraphics[angle=-0,width=0.18\textwidth]{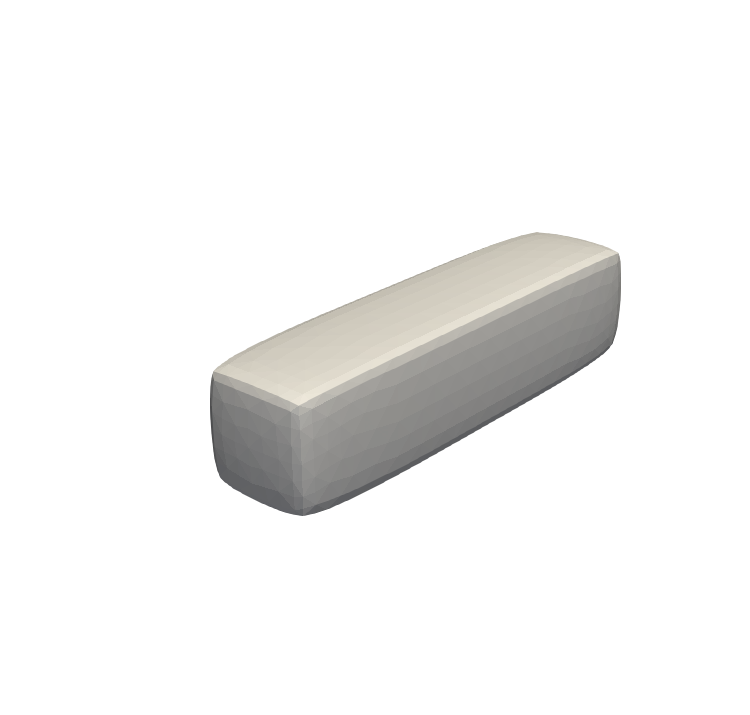}
\includegraphics[angle=-0,width=0.18\textwidth]{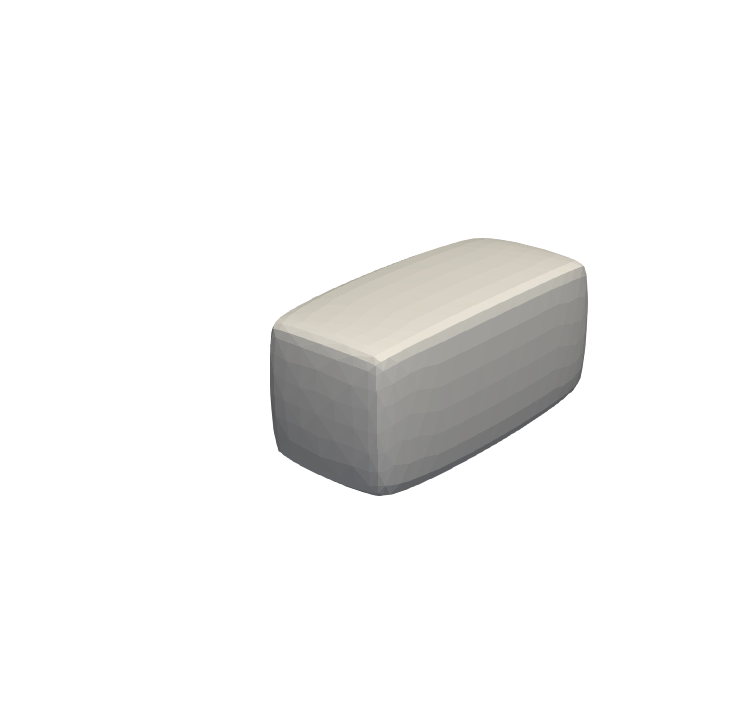}
\includegraphics[angle=-0,width=0.18\textwidth]{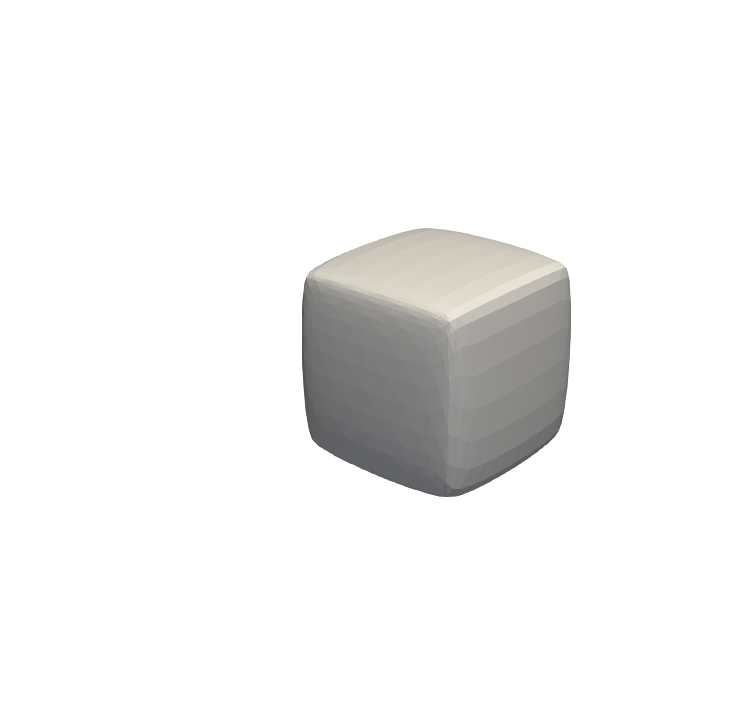}
\\
\includegraphics[angle=-90,width=0.3\textwidth]{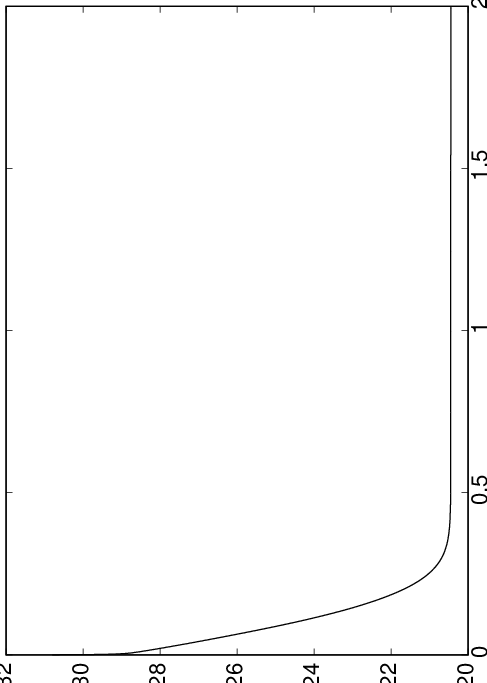}\quad
\includegraphics[angle=-90,width=0.3\textwidth]{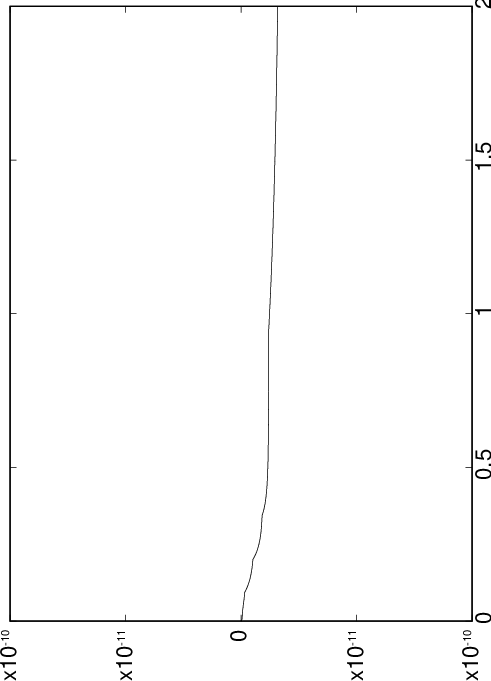}
\caption{$\Gamma^m$ at times $t=0,0.05,0.1,0.2,2$. Below we show a plot of
the discrete energy $|\Gamma^m|_\gamma$ and of the relative volume loss 
$v_\Delta^m$ over time.}
\label{fig:aniL3nonconvex}
\end{figure}%

In the final simulation we use an anisotropic energy of the form 
\eqref{eq:g} with $r>1$, so that the iteration \eqref{eq:itaniMS} also has to
account for the nonlinearity in the approximation of the anisotropy in
\eqref{eq:fdaniMS}. In particular, we choose 
\begin{equation*} 
\gamma(\vec{p}) = \left(\sum_{i=1}^3 \left[ \delta^2|\vec{p}|^2 +
p_i^2(1-\delta^2)\right]^\frac r2 \right)^\frac1r,\quad \delta = 
0.1,\ r = 9,
\end{equation*}
in order to model an anisotropy with an octahedral Wulff shape, see e.g.\
\cite[Figs.\ 4, 15]{ani3d}. For the experiment in 
Figure~\ref{fig:anicigar661} we start from 
the same rounded cylinder of total dimension $6\times6\times1$ from 
Figure~\ref{fig:cigar661_K1538}, and also use the discretization
parameters from the earlier simulation.
During the interesting
evolution the moving interface approaches the Wulff shape, and decreases its
anisotropic surface energy as it does so. As expected, the numerical
approximation conserves the enclosed volume exactly.
\begin{figure}
\center
\includegraphics[angle=-0,width=0.18\textwidth]{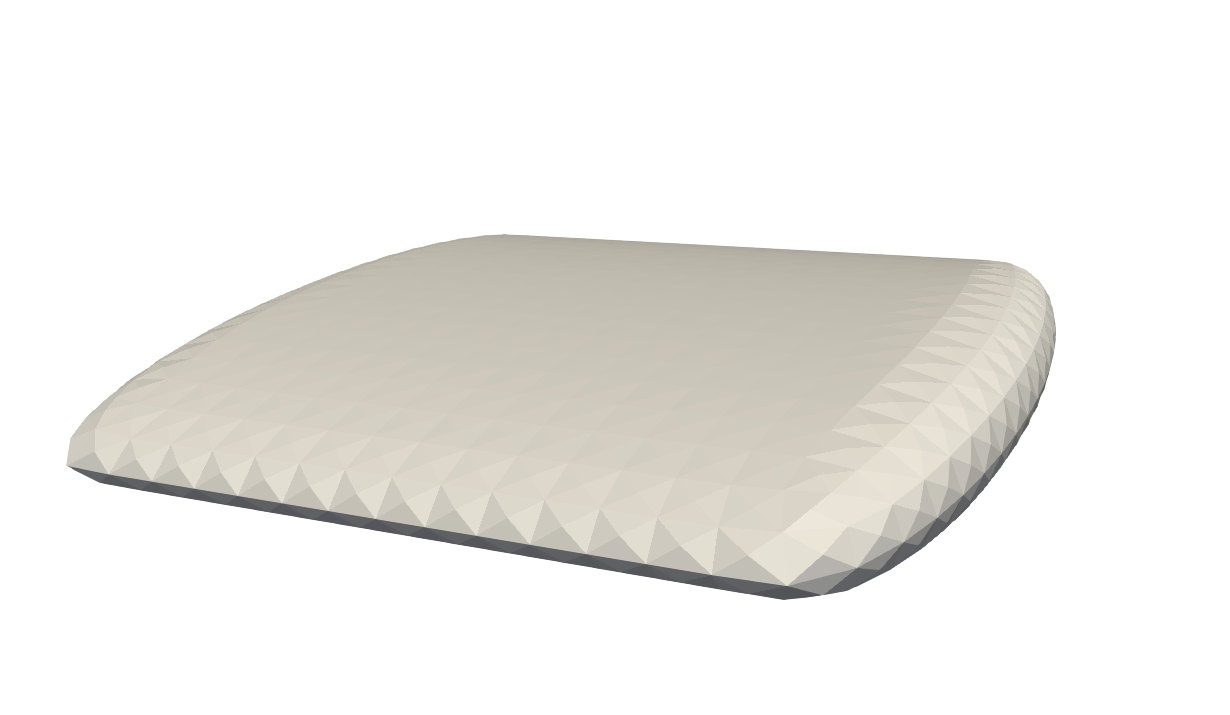}
\includegraphics[angle=-0,width=0.18\textwidth]{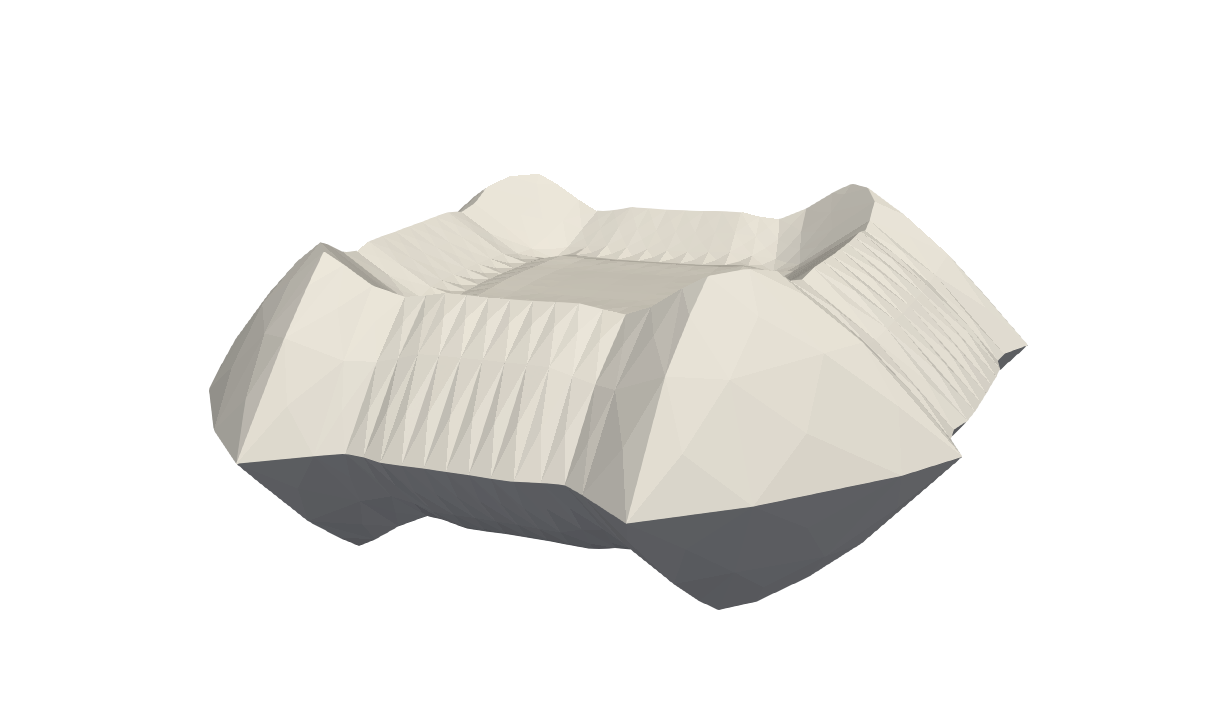}
\includegraphics[angle=-0,width=0.18\textwidth]{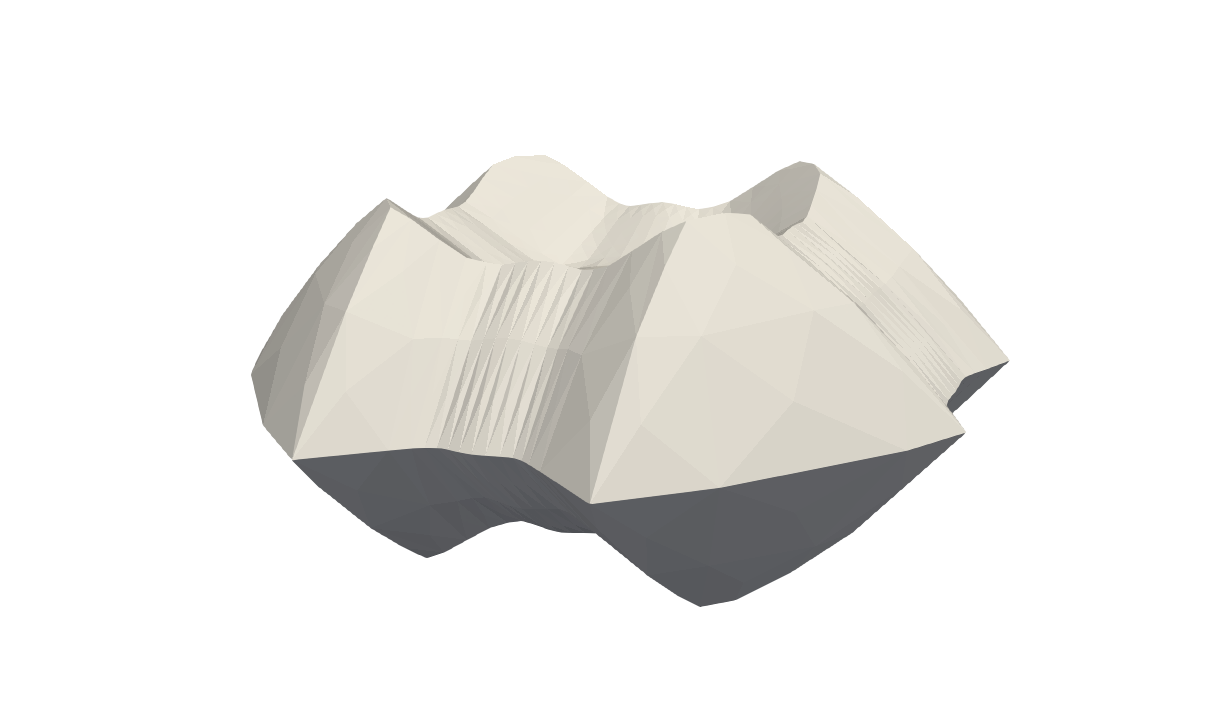}
\includegraphics[angle=-0,width=0.18\textwidth]{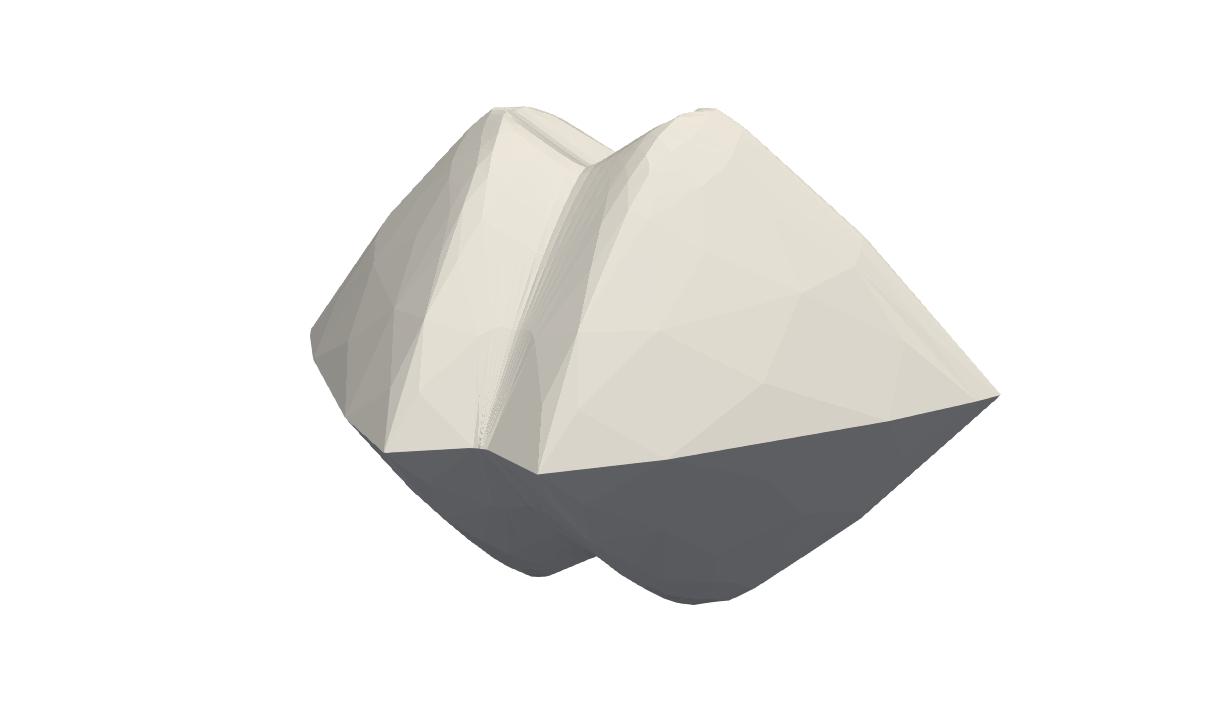}
\includegraphics[angle=-0,width=0.18\textwidth]{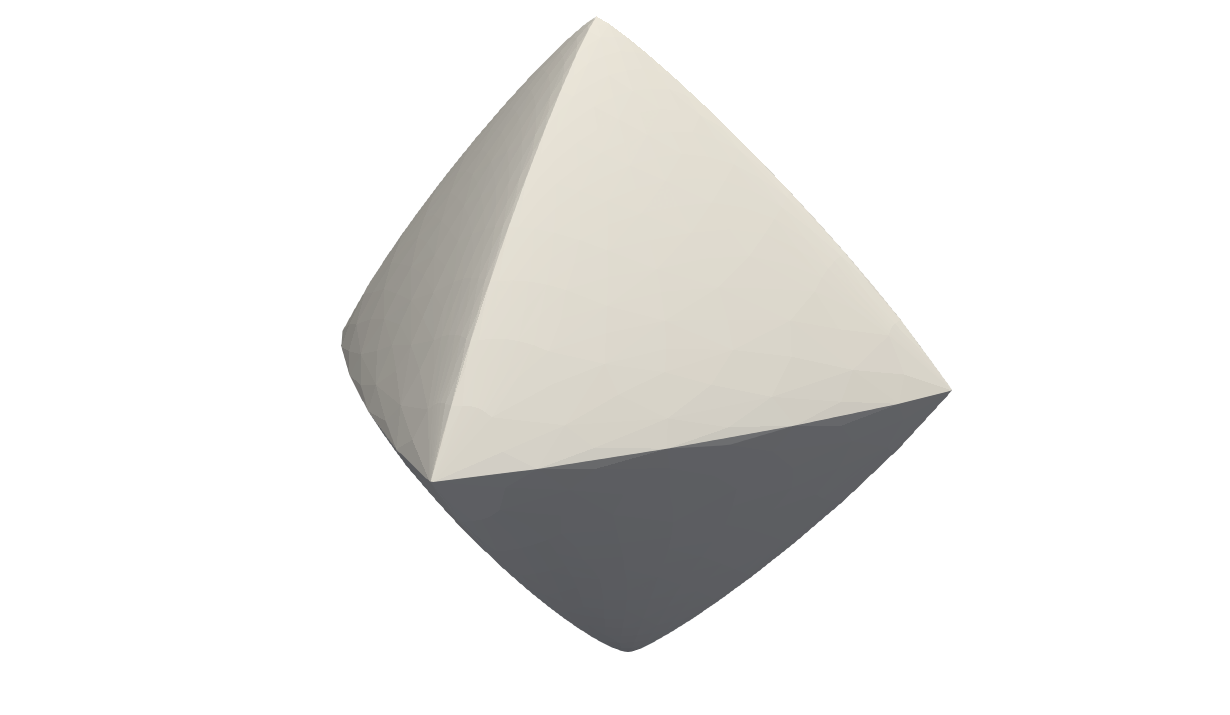}
\\
\includegraphics[angle=-90,width=0.3\textwidth]{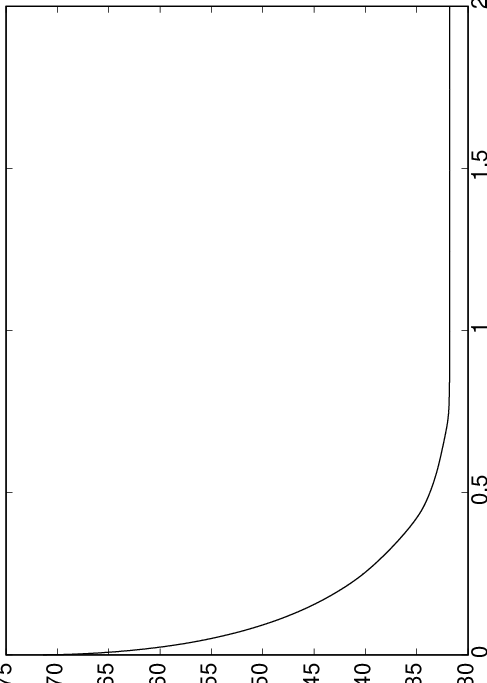}\quad
\includegraphics[angle=-90,width=0.3\textwidth]{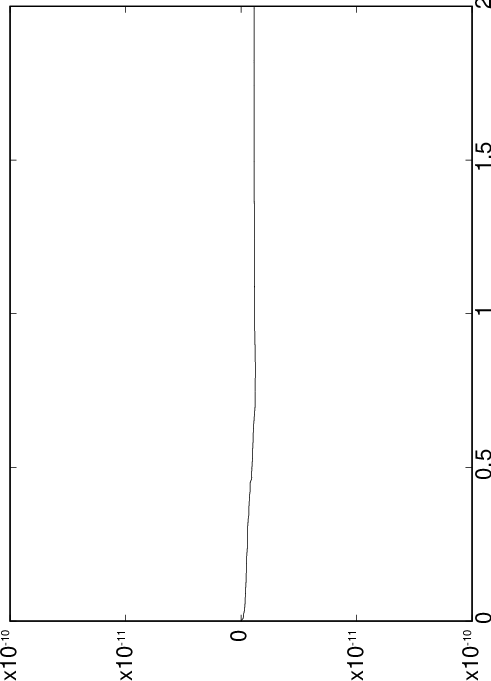}
\caption{$\Gamma^m$ at times $t=0,0.1,0.2,0.5,2$. Below we show a plot of
the discrete energy $|\Gamma^m|$ and of the relative volume loss 
$v_\Delta^m$ over time.}
\label{fig:anicigar661}
\end{figure}%

\def\soft#1{\leavevmode\setbox0=\hbox{h}\dimen7=\ht0\advance \dimen7
  by-1ex\relax\if t#1\relax\rlap{\raise.6\dimen7
  \hbox{\kern.3ex\char'47}}#1\relax\else\if T#1\relax
  \rlap{\raise.5\dimen7\hbox{\kern1.3ex\char'47}}#1\relax \else\if
  d#1\relax\rlap{\raise.5\dimen7\hbox{\kern.9ex \char'47}}#1\relax\else\if
  D#1\relax\rlap{\raise.5\dimen7 \hbox{\kern1.4ex\char'47}}#1\relax\else\if
  l#1\relax \rlap{\raise.5\dimen7\hbox{\kern.4ex\char'47}}#1\relax \else\if
  L#1\relax\rlap{\raise.5\dimen7\hbox{\kern.7ex
  \char'47}}#1\relax\else\message{accent \string\soft \space #1 not
  defined!}#1\relax\fi\fi\fi\fi\fi\fi}

\end{document}